\newtheorem{theorem}[equation]{Theorem}
\newtheorem{lemma}[equation]{Lemma}
\newtheorem{corollary}[equation]{Corollary}
\newtheorem{proposition}[equation]{Proposition}
\numberwithin{equation}{section}
\begin{document}

\title{Exponential sums nondegenerate \linebreak relative to a lattice} 
\author{Alan Adolphson}
\address{Department of Mathematics\\
Oklahoma State University\\
Stillwater, Oklahoma 74078}
\email{adolphs@math.okstate.edu}
\author{Steven Sperber}
\address{School of Mathematics\\
University of Minnesota\\
Minneapolis, Minnesota 55455}
\email{sperber@math.umn.edu}
\keywords{Exponential sums, $p$-adic cohomology}
\subjclass{Primary 11L07, 11T23, 14F30}
\begin{abstract}
Our previous theorems on exponential sums often did not apply or did not give
sharp results when certain powers of a variable appearing in the polynomial
were divisible by $p$.  We remedy that defect in this paper by
systematically applying ``$p$-power reduction,'' making it possible to 
strengthen and extend our earlier results. 
\end{abstract}
\maketitle

\section{Introduction}

In the papers \cite{AS1}--\cite{AS4} we established properties of the
$L$-functions of exponential sums on affine space ${\mathbb A}^n$ and the
torus ${\mathbb T}^n$.  The purpose of this article is to prove a general
result that leads to a sharpening of the theorems of those papers.

Let $p$ be a prime, let $q=p^r$, and let ${\mathbb F}_q$ be the field of $q$
elements.  Let $f\in{\mathbb F}_q[x_1^{\pm 1},\dots,x_n^{\pm 1}]$ be a Laurent 
polynomial, say,
\begin{equation}
f = \sum_{j\in J} a_jx^j,
\end{equation}
where $a_j\in{\mathbb F}_q^\times$ and $J$ is a finite subset of ${\mathbb
  Z}^n$.  Let ${\mathbb Z}\langle J\rangle$ be the subgroup of ${\mathbb Z}^n$
generated by the elements of $J$.  By the basic theory of abelian groups,
there exists a basis ${\bf u}_1,\dots,{\bf u}_n$ for ${\mathbb Z}^n$ and
integers $d_1,\dots,d_k$ such that $d_1{\bf u}_1,\dots,d_k{\bf u}_k$ is a
basis for ${\mathbb Z}\langle J\rangle$.  After a coordinate change on
${\mathbb T}^n$, we may assume that ${\bf u}_1,\dots,{\bf u}_n$ is the
standard basis.  The Laurent polynomial $f$ may then be written in the form
\[ f = g(x_1^{d_1},\dots,x_k^{d_k}) \]
for some $g\in{\mathbb F}_q[x_1^{\pm 1},\dots,x_k^{\pm 1}]$.  Write $d_i =
p^{b_i}e_i$ for each $i$, where $b_i\geq 0$ and $(e_i,p)=1$.  Since raising
to the $p$-th power is an automorphism of ${\mathbb F}_q$, the exponential
sums associated to the polynomials $f$ and $g(x_1^{e_1},\dots,x_k^{e_k})$ are
identical.  Furthermore, the theorems of \cite{AS1}--\cite{AS4} generally
produce sharper results when applied to $g(x_1^{e_1},\dots,x_k^{e_k})$ than
when applied to $f$.  (Thus there is no improvement over our earlier
work if $p\nmid[{\mathbb Z}^k:{\mathbb Z}\langle J\rangle]$.)  We refer to 
$g(x_1^{e_1},\dots,x_k^{e_k})$ as the {\it $p$-power reduction\/} of $f$.

Over ${\mathbb A}^n$, the technique of $p$-power reduction is less
versatile because one cannot make the same sorts of coordinate
changes.  One has a standard toric decomposition of ${\mathbb A}^n$,
${\mathbb A}^n = \bigcup_{A\subseteq\{1,\dots,n\}}{\mathbb T}_A$, 
where ${\mathbb T}_A$ denotes the $|A|$-dimensional torus with
coordinates $\{x_i\}_{i\in A}$.  Given $f\in{\mathbb
  F}_q[x_1,\dots,x_n]$, one can try to analyze the corresponding
exponential sum on ${\mathbb A}^n$ by analyzing its restriction to
each of these tori, but the picture is complicated by the fact that
$p$-power reduction may require different coordinate changes on
different tori.  It thus seems worthwhile to generalize our previous
results to apply directly to the polynomial as given, to avoid the
task of performing $p$-power reduction on a case-by-case basis.

Let $M_J$ be the {\it prime-to-$p$ saturation\/} of ${\mathbb Z}\langle
J\rangle$, 
\[ M_J = \{u\in{\mathbb Z}^n\mid \text{$ku\in{\mathbb Z}\langle J\rangle$ for
  some $k\in{\mathbb Z}$ satisfying $(k,p) = 1$}\}, \]
and let ${\mathbb R}\langle J\rangle$ denote the subspace of ${\mathbb R}^n$
spanned by the elements of $J$.  We will get a strengthening of our earlier
results when $M_J$ is a proper subset of ${\mathbb
  Z}^n\cap{\mathbb R}\langle J\rangle$.  Let  
\[ [{\mathbb Z}^n\cap{\mathbb R}\langle J\rangle:{\mathbb Z}\langle J\rangle]
= p^ae, \] 
where $a\geq 0$ and $(e,p)=1$.  Then 
\begin{equation}
[{\mathbb Z}^n\cap{\mathbb R}\langle
J\rangle: M_J]=p^a, 
\end{equation}
so $M_J\neq {\mathbb Z}^n\cap{\mathbb R}\langle J\rangle$ if and only if
$a>0$.

Part of the motivation for this work was a desire to understand a theorem of 
Katz\cite[Theorem~3.6.5]{K} from our point of view.  Suppose that
$f\in{\mathbb F}_q[x_1,\dots,x_n]$ is a homogeneous polynomial of degree
$d=p^ke$, $(e,p)=1$, $k>1$.  Katz showed that if $f=0$ defines a smooth
hypersurface in ${\mathbb P}^{n-1}$, then the $L$-function associated to the
exponential sum defined by $f$ (see Section~2 for the definition) is a
polynomial ($n$ odd) or the reciprocal of a polynomial ($n$ even) of degree 
\[ \frac{1}{p^k}((d-1)^n + (-1)^n(p^k-1)) \]
all of whose reciprocal roots have absolute value $q^{n/2}$.
Note that in this situation $[{\mathbb Z}^n:M_J]=p^k$.  Our earlier
results (\cite{AS3}) do not apply to polynomials of degree divisible by $p$.
However, we show here that when $M_J$ is a proper subset of ${\mathbb Z}^n$
one can weaken the definition of nondegeneracy used in \cite{AS3} and still
deduce conclusions analogous to those of that article.  In particular, we show
that the above theorem of Katz is true as well for non-homogeneous
polynomials, provided that the homogeneous part of highest degree defines a
smooth hypersurface in ${\mathbb P}^{n-1}$ and $[{\mathbb
  Z}^n:M_J]=p^k$.  In other words, the conclusion remains true when one
perturbs the smooth homogeneous polynomial by adding arbitrary terms of
degrees $p^ke'$, $e'<e$. 

This generalization of Katz's theorem (Proposition 5.1 below) will be
derived as a consequence of Theorem~4.20.  Another consequence of that
theorem is the following result.  Consider the Dwork family of
hypersurfaces  
\[ x_1^n + \dots + x_n^n + \lambda x_1\dots x_n = 0 \]
in ${\mathbb P}^{n-1}$.  If $n=p^ke$, where $k\geq 1$ and $(p,e) = 1$,
and $\lambda\neq 0$, this hypersurface is singular (except for
$n=2,3$).  We show (Corollary 5.9 below) that the zeta function of
this hypersurface has the form 
\[ Z(t) = \frac{R(t)^{(-1)^{n-1}}}{(1-t)(1-qt)\dots (1-q^{n-2}t)}, \]
where $R(t)$ is a polynomial of degree
\[ (p^k-1)e^{n-1} + e^{-1}((e-1)^n + (-1)^n(e-1)) \]
all of whose reciprocal roots have absolute value $q^{(n-2)/2}$.

As another example, we strengthen the classical theorem of Chevalley-Warning.
Let $f=\sum_{j\in J}a_jx^j\in{\mathbb F}_q[x_1,\dots,x_n]$ and let $N(f)$
denote the number of solutions of $f=0$ with coordinates in ${\mathbb F}_q$.
Let ${\mathbb N}$ denote the nonnegative integers, let ${\mathbb N}_+$ denote
the positive integers, and let $J' = \{(j,1)\in{\mathbb N}^{n+1}\mid j\in
J\}$.  Let $\Delta(J')$ denote the convex hull of $J'\cup\{(0,\dots,0)\}$ in
${\mathbb R}^{n+1}$.  
\begin{theorem}
Let $\mu$ be the smallest positive integer such that $\mu\Delta(J')$, the
dilation of $\Delta(J')$ by the factor $\mu$, contains a point of
$M_{J'}\cap({\mathbb N}_+)^{n+1}$.  Then ${\rm ord}_q\,N(f)\geq \mu-1$, where
${\rm ord}_q$ denotes the $p$-adic valuation normalized by ${\rm ord}_q\,q =
1$.
\end{theorem}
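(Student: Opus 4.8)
The plan is to realize $qN(f)$ as an exponential sum on ${\mathbb A}^{n+1}$ and to bound the $p$-adic valuation of that sum. Put $F(x_1,\dots,x_{n+1})=x_{n+1}f(x)=\sum_{j\in J}a_jx^jx_{n+1}$; its support is exactly $J'$, so its Newton polyhedron is $\Delta(J')$, its exponents generate ${\mathbb Z}\langle J'\rangle$, and the prime-to-$p$ saturation of that group is $M_{J'}$. Fixing a nontrivial additive character $\psi\colon{\mathbb F}_q\to{\mathbb C}^\times$ and using that $\sum_{t\in{\mathbb F}_q}\psi(tc)$ equals $q$ if $c=0$ and $0$ otherwise,
\[ S(F):=\sum_{(x,t)\in{\mathbb F}_q^{n+1}}\psi\bigl(F(x,t)\bigr)=q\cdot\#\{x\in{\mathbb F}_q^n:f(x)=0\}=qN(f), \]
so the assertion is equivalent to ${\rm ord}_q\,S(F)\ge\mu$. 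The choice of $J'$ makes $\mu$ transparent: every element of $J'$ has last coordinate $1$, so the weight function of $\Delta(J')$ is just $u\mapsto u_{n+1}$ on the cone over $\Delta(J')$, and hence $\mu$ is the least value of $u_{n+1}$ as $u$ ranges over $M_{J'}\cap({\mathbb N}_+)^{n+1}$. In particular $\mu$ is a positive integer and $\mu\le n$ (take $u=\sum_{i=1}^n(j^{(i)},1)$ with $j^{(i)}\in J$ chosen so that $(j^{(i)})_i>0$, possible since finiteness of $\mu$ forces every variable to occur in $f$), so the bound ${\rm ord}_q\,N(f)\ge\mu-1$ is never vacuous.

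To estimate ${\rm ord}_q\,S(F)$ I would decompose ${\mathbb A}^{n+1}$ into coordinate tori. The locus $x_{n+1}=0$, on which $F$ vanishes, contributes $q^n$, of $q$-order $n\ge\mu$. On each torus with $x_{n+1}\ne0$ every monomial $a_jx^jx_{n+1}$ is a unit, so each factor $\psi(a_jx^jx_{n+1})$ can be expanded by the Gauss-sum identity $\psi(y)=(q-1)^{-1}\sum_\chi g(\chi)\bar\chi(y)$ ($y\ne0$); multiplying out and summing over the torus, each such torus produces a sum over tuples of multiplicative characters $\chi_j=\omega^{-e_j}$ ($\omega$ the Teichm\"uller character, $0\le e_j\le q-2$) subject to congruences $\sum_{j\in J}e_j\,(j,1)\equiv0\pmod{q-1}$. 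The terms with some $e_j=0$ cancel against the lower-dimensional tori, leaving only terms with all $e_j\ge1$. By Stickelberger's theorem ${\rm ord}_q\,g(\omega^{-e})=s(e)/\bigl(r(p-1)\bigr)$, where $s(e)$ is the base-$p$ digit sum of $e$, and by the identity $s(e)=(p-1)\sum_{l=0}^{r-1}\langle p^le/(q-1)\rangle$ the $q$-order of a surviving term is at least
\[ \frac{1}{r(p-1)}\sum_{j\in J}s(e_j)=\frac1r\sum_{l=0}^{r-1}\sum_{j\in J}\Bigl\langle\frac{p^le_j}{q-1}\Bigr\rangle=\frac1r\sum_{l=0}^{r-1}v^{(l)}_{n+1}, \qquad v^{(l)}:=\frac1{q-1}\sum_{j\in J}\bigl(p^le_j\bmod(q-1)\bigr)(j,1). \]
Each $v^{(l)}$ is an integer vector with $(q-1)v^{(l)}\in{\mathbb Z}\langle J'\rangle$, hence, as $(q-1,p)=1$, lies in $M_{J'}$; and since all $e_j\ge1$ (and every variable occurs in $f$) it lies in $({\mathbb N}_+)^{n+1}$. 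Therefore $v^{(l)}_{n+1}\ge\mu$ for every $l$, so every surviving term has $q$-order $\ge\mu$, whence ${\rm ord}_q\,S(F)\ge\mu$. This is exactly the lower-bound half of the $p$-adic analysis behind Theorem~4.20, which needs no nondegeneracy hypothesis; the essential point, as there, is that $(q-1,p)=1$ confines the vectors $v^{(l)}$ to the smaller lattice $M_{J'}$, which is what can raise $\mu$ above its value for ${\mathbb Z}^{n+1}$ and thereby sharpen our earlier results.

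I expect the main obstacle to be the cancellation among the boundary tori---equivalently, the vanishing of the relevant cohomology in the complex of Section~4---that reduces the estimate to the surviving terms with all $e_j\ne0$; the rest (the reduction to an exponential sum, the identification of the weight function of $\Delta(J')$ with the last coordinate, and the Frobenius-twist estimate above) is routine. Note that the naive Chevalley--Warning computation---expanding $f(x)^{q-1}$ and using $\sum_{x\in{\mathbb F}_q}x^a\in\{q,q-1,0\}$---controls $N(f)$ only modulo $p$, so passing to characteristic zero (Teichm\"uller representatives, or the Dwork splitting function and the chain complex of Section~4) is unavoidable for the full ${\rm ord}_q$ statement. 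Once ${\rm ord}_q\,S(F)\ge\mu$ is in hand, the theorem follows immediately from $S(F)=qN(f)$.
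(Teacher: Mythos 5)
Your proof takes a genuinely different route from the paper's, and, as you suspect, it has a real gap at its center. The setup is identical to the paper's: the paper also derives Theorem~1.3 from $S_1({\mathbb A}^{n+1},yf)=qN(f)$ (it is the case $r=1$ of Corollary~3.5), and your observation that the weight function of $\Delta(J')$ is just $u\mapsto u_{n+1}$, so that $\mu=\omega(yf)$, is exactly the identification the paper uses. Where you diverge is in estimating ${\rm ord}_q\,S_1({\mathbb A}^{n+1},yf)$: you propose the arithmetic route via Gauss sums and Stickelberger's theorem, whereas the paper applies Theorem~3.3, proved by transporting the Dwork trace-formula argument of \cite{AS2} to the Banach space $L_0(p/(p-1))$ built on $M_J\cap C(f)$. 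The two routes rest on the same phenomenon---the coprimality of $p$ with $q-1$ (in your version) or with the index $[M_J:{\mathbb Z}\langle J\rangle]$ (in the paper's Lemma~2.2, which is what makes $\psi$ raise the depth index and hence kills the trace off $M_J$)---so your Stickelberger computation, showing that the twisted exponent vectors $v^{(l)}$ lie in $M_{J'}\cap({\mathbb N}_+)^{n+1}$ and therefore have last coordinate $\ge\mu$, does capture the key new idea of the theorem. But it is not a proof.

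The gap is the sentence ``the terms with some $e_j=0$ cancel against the lower-dimensional tori, leaving only terms with all $e_j\ge1$.'' This is not established, and it is the crux. On a lower torus one only gets characters indexed by $J_A$ with the congruence in fewer coordinates, so the inclusion--exclusion does not obviously telescope to the asserted form; and in the standard careful versions of this calculation (Katz, Wan) the surviving index set is $1\le e_j\le q-1$, not $1\le e_j\le q-2$, a distinction your argument cannot ignore: the identity $s(e)=(p-1)\sum_l\langle p^le/(q-1)\rangle$ that you use to pass from digit sums to the lattice points $v^{(l)}$ fails at $e=q-1$, where $s(q-1)=r(p-1)$ but $p^l(q-1)\bmod(q-1)=0$, so $v^{(l)}$ loses the corresponding contribution to its coordinates and need not lie in $({\mathbb N}_+)^{n+1}$. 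One can patch this (the lost weight is compensated by the extra ${\rm ord}_q=1$ each such factor carries), but it is not ``routine.'' More seriously, your suggested repair---that this cancellation is ``equivalently, the vanishing of the relevant cohomology in the complex of Section~4''---does not apply: the Koszul complex of Section~4 is acyclic below top degree only under the hypothesis that $f$ be nondegenerate relative to $(\Delta(f),M_J)$, which Theorem~1.3 does not assume. Theorem~3.3 is proved without nondegeneracy, via the toric trace identity (3.4) and a direct Fredholm-determinant estimate, not via cohomological vanishing. To complete your arithmetic route you would need to carry out the toric inclusion--exclusion and the $e_j=q-1$ bookkeeping explicitly, rather than invoke the cohomology of Section~4.
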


For example, the equation $\sum_{i=1}^n x_i^{p^{k_i}}=0$ has $q^{n-1}$
solutions: since raising to the $p$-th power is an automorphism of
${\mathbb F}_q$, one can assign arbitrary values to
$x_1,\dots,x_{n-1}$ and there will be a unique value of $x_n$
satisfying the equation.  One checks that $M_{J'} = {\mathbb Z}\langle
J'\rangle$ is the lattice generated by the
$\{(0,\dots,0,p^{k_i},0,\dots,0,1)\}_{i=1}^n$, so $\mu=n$ and
Theorem~1.3 gives the precise divisibility by $q$.  

For a more subtle example, let $p=3$, $n=3$, and consider the polynomial
\[ f = x_1x_2^2 + x_2x_3^2 + x_1^2x_3. \]
The lattice $M_{J'} = {\mathbb Z}\langle J'\rangle$ is the rank-three
sublattice of ${\mathbb Z}^4$ with basis the vectors ${\bf u}_1 = (1,2,0,1)$,
${\bf u}_2 = (0,1,2,1)$, ${\bf u}_3 = (2,0,1,1)$.  The only point of
$\Delta(J')\cap({\mathbb N}_+)^4$ is $(1,1,1,1)$ and one has
\begin{equation}
(1,1,1,1) = \frac{1}{3}({\bf u}_1 + {\bf u}_2 + {\bf u}_3),
\end{equation}
thus $(1,1,1,1)\not\in M_{J'}$.  It follows that $\mu>1$, so Theorem~1.3
implies that $N(f)$ is divisible by $3^r$. (In fact, ${\bf u}_1 + {\bf u}_2\in
M_{J'}\cap ({\mathbb N}_+)^4$, so $\mu = 2$.) On the other hand, since the
degree of $f$ equals the number of variables, the classical Chevalley-Warning
Theorem does not predict the divisibility of $N(f)$ by $3$.  If we take the
same polynomial $f$ but assume $p\neq 3$, then (1.4) shows that
$(1,1,1,1)\in M_{J'}$, so $\mu=1$ and Theorem~1.3 does not predict any
divisibility by $p$.

The first main idea of this paper is that when computing the action of
Dwork's Frobenius operator, which gives the $L$-function of the
exponential sum on the torus, one can ignore the action of Frobenius
on power series whose exponents lie outside of $M_J$ since such power
series contribute nothing to the spectral theory of Frobenius.  This
idea is explained in Section 2.  The second main idea is the notion of
nondegeneracy relative to a lattice, which is introduced in
Section~4.  It guarantees that the $p$-power reduction of $f$ will be
nicely behaved.  This leads to precise formulas for the degree of the
$L$-function and the number of roots of a given archimedian weight.

\section{Trace Formula}

Let $\Psi:{\mathbb F}_q\to{\mathbb Q}(\zeta_p)$ be a nontrivial additive
character and define
\[ S_m({\mathbb T}^n,f) = \sum_{x\in{\mathbb T}^n({\mathbb F}_{q^m})}
\Psi({\rm Tr}_{{\mathbb F}_{q^m}/{\mathbb F}_q}(f(x)),) \]
where ${\rm Tr}_{{\mathbb F}_{q^m}/{\mathbb F}_q}$ denotes the trace map.  In
the special case where $f\in{\mathbb F}_q[x_1,\dots,x_n]$, we can also define
\[ S_m({\mathbb A}^n,f) = \sum_{x\in{\mathbb A}^n({\mathbb F}_{q^m})}
\Psi({\rm Tr}_{{\mathbb F}_{q^m}/{\mathbb F}_q}(f(x))). \]
There are corresponding $L$-functions
\[ L({\mathbb T}^n,f;t) = \exp\bigg(\sum_{m=1}^\infty S_m({\mathbb T}^n,f)
\frac{t^m}{m}\bigg) \]
and 
\[ L({\mathbb A}^n,f;t) = \exp\bigg(\sum_{m=1}^\infty S_m({\mathbb A}^n,f)
\frac{t^m}{m}\bigg). \]

Let ${\mathbb Q}_p$ denote the field of $p$-adic numbers and ${\mathbb Z}_p$
the ring of $p$-adic integers.  Set $\Omega_1 = {\mathbb Q}_p(\zeta_p)$.  Then
$\Omega_1$ is a totally ramified extension of ${\mathbb Q}_p$ of degree
$p-1$.  Let $K$ denote the unramified extension of ${\mathbb Q}_p$ of degree
$r$ and set $\Omega_0 = K(\zeta_p)$.  The Frobenius automorphism $x\mapsto
x^p$ of ${\rm Gal}({\mathbb F}_q/{\mathbb F}_p)$ lifts to a generator $\tau$
of ${\rm Gal}(\Omega_0/\Omega_1)$ by setting $\tau(\zeta_p) = \zeta_p$.  Let
$\Omega$ be the completion of an algebraic closure of $\Omega_0$.  Let ``ord''
denote the additive valuation on $\Omega$ normalized by ${\rm ord}\,p = 1$ and
let ``${\rm ord}_q$'' denote the additive valuation normalized by ${\rm ord}_q
\,q = 1$.  

Let $E(t)=\exp(\sum_{i=0}^\infty t^{p^i}/p^i)$ be the Artin-Hasse exponential
series.  Let $\gamma\in\Omega_1$ be a solution of $\sum_{i=0}^\infty
t^{p^i}/p^i = 0$ satisfying ${\rm ord}\,\gamma = 1/(p-1)$ and set
\[ \theta(t) = E(\gamma t) =\sum_{i=0}^\infty \lambda_it^i\in\Omega_1[[t]]. \]
The series $\theta(t)$ is a splitting function in Dwork's terminology and its
coefficients satisfy
\begin{equation}
{\rm ord}\,\lambda_i\geq \frac{i}{p-1}. 
\end{equation}

Define the {\it Newton polyhedron\/} of $f$, denoted $\Delta(f)$, to be the
convex hull in ${\mathbb R}^n$ of the set $J\cup\{(0,\dots,0)\}$.  Let $C(f)$
be the cone in ${\mathbb R}^n$ over $\Delta(f)$, i.~e., $C(f)$ is the union of
all rays in ${\mathbb R}^n$ emanating from the origin and passing through
$\Delta(f)$.  For any lattice point $u\in C(f)\cap{\mathbb Z}^n$, let $w(u)$,
the {\it weight\/} of $u$, be defined as the smallest positive real number
(necessarily rational) such that $u\in w(u)\Delta(f)$, where $w(u)\Delta(f)$
denotes the dilation of $\Delta(f)$ by the factor $w(u)$.  Then
\[ w:C(f)\cap{\mathbb Z}^n\to \frac{1}{N}{\mathbb Z} \]
for some positive integer $N$.  We fix a choice $\tilde{\gamma}$ of $N$-th
root of $\gamma$ and set $\tilde{\Omega}_0 = \Omega_0(\tilde{\gamma})$,
$\tilde{\Omega}_1 = \Omega_1(\tilde{\gamma})$.  We extend $\tau\in{\rm
  Gal}(\Omega_0/\Omega_1)$ to a generator ${\rm
  Gal}(\tilde{\Omega}_0/\tilde{\Omega}_1)$ by setting $\tau(\tilde{\gamma}) =
\tilde{\gamma}$.  Let $\tilde{\mathcal O}_0$ be the ring of integers of
$\widetilde{\Omega}_0$.  

Let $M$ be a lattice such that $M_J\subseteq M\subseteq {\mathbb Z}^n\cap
{\mathbb R}\langle J\rangle$, let $L = {\rm Hom}_{\mathbb Z}(M,{\mathbb
  Z})$, and let $\ell\in L$.  We extend $\ell$ to a function on ${\mathbb
  Z}^n\cap{\mathbb R}\langle J\rangle$ as follows.  For $u\in{\mathbb Z}^n\cap
{\mathbb R}\langle J\rangle$ we have $p^au\in M$ by (1.2), so we may define 
\[ \ell(u) = p^{-a}\ell(p^au). \]
This definition identifies $L$ with a subgroup of ${\rm Hom}_{\mathbb
  Z}({\mathbb Z}^n\cap {\mathbb R}\langle J\rangle,p^{-a}{\mathbb Z})$.
Define  
\[ M_0(f) = \{u\in{\mathbb Z}^n\cap C(f)\mid {\rm ord}\,\ell(u)\geq 0 \text{
  for all $\ell\in L$.} \} \]
Note that $M_0(f) = M\cap C(f)$.  For $i>0$ let 
\[ M_i(f) = \{u\in{\mathbb Z}^n\cap C(f)\mid \inf_{\ell\in L}\{{\rm
  ord}\,\ell(u)\} = -i\}. \] 
Note that since $L$ has finite rank, the infimum over $L$ always exists.
Furthermore, we have $M_i(f) = \emptyset$ for $i>a$ and
\[ {\mathbb Z}^n\cap C(f) = \bigcup_{i=0}^a M_i(f). \]

We consider the following spaces of power series (where $b\in{\mathbb R}$,
$b\geq 0$, $c\in{\mathbb R}$, and $0\leq i\leq a$):
\begin{align*}
L_i(b,c) &= \bigg\{\sum_{u\in M_i(f)} A_ux^u\mid
  A_u\in\Omega_0,\;{\rm ord}\,A_u\geq bw(u) + c\bigg\} \\
L_i(b) &= \bigcup_{c\in{\mathbb R}} L_i(b,c) \\
B_i &= \bigg\{\sum_{u\in M_i(f)} A_u\tilde{\gamma}^{Nw(u)}x^u\mid
  A_u\in\tilde{\mathcal O}_0,\;A_u\to 0\;{\rm as}\;u\to\infty\bigg\} \\
B'_i &= \bigg\{\sum_{u\in M_i(f)} A_u\tilde{\gamma}^{Nw(u)}x^u\mid
  A_u\in\widetilde{\Omega}_0,\;A_u\to 0\;{\rm as}\;u\to\infty\bigg\}
\end{align*}
We also define $L(b,c)$, $L(b)$, $B$, $B'$ as the unions of these spaces for
$i=0,\dots,a$.  Note that if $b>1/(p-1)$, then $L_i(b)\subseteq B'_i$ and for
$c\geq 0$, $L_i(b,c)\subseteq B_i$.  Similarly $L(b)\subseteq B'$ and for
$c\geq 0$, $L(b,c)\subseteq B$.  Define a norm on $B_i$, $i=0,\dots,a$, as 
follows.  If
\[ \xi = \sum_{u\in M_i(f)}A_u\tilde{\gamma}^{Nw(u)}x^u, \]
then set
\[ \|\xi\| = \sup_{u\in M_i(f)} |A_u|. \]
One defines a norm on $B$ in an analogous fashion.

Let $\hat{f} = \sum_{j\in J}\hat{a}_jx^j$ be the Teichm\"uller lifting of $f$,
i.~e., $\hat{a}_j^q = \hat{a}_j$ and the reduction of $\hat{f}$ modulo $p$
is~$f$.  Set
\begin{align*}
F(x) &= \prod_{j\in J} \theta(\hat{a}_jx^j), \\
F_0(x) &= \prod_{i=0}^{r-1} F^{\tau^i}(x^{p^i}). 
\end{align*}
The estimate (2.1) implies that $F(x)$ and $F_0(x)$ are well-defined and
satisfy
\[ F(x)\in L_0\bigg(\frac{1}{p-1},0\bigg),\quad F_0(x)\in
L_0\bigg(\frac{p}{q(p-1)},0\bigg). \] 

We define the operator $\psi$ on series by
\[ \psi\bigg(\sum_{u\in{\mathbb Z}^n}A_ux^u\bigg) = \sum_{u\in{\mathbb Z}^n} 
A_{pu}x^u.\] 
Clearly, $\psi(L(b,c))\subseteq L(pb,c)$.  

\begin{lemma}
For $1\leq i<a$ we have
\[ \psi(L_i(b,c))\subseteq L_{i+1}(b,c) \]
and for $i=a$ we have
\[ \psi(L_a(b,c)) = 0. \]
Furthermore, the same assertions hold with $L_i(b,c)$ replaced by $B_i'$.
\end{lemma}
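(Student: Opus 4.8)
The plan is to reduce the lemma to two elementary scaling identities and then read off the conclusion from the definition of $\psi$. The first identity: since $\ell(u)=p^{-a}\ell(p^au)$ on ${\mathbb Z}^n\cap{\mathbb R}\langle J\rangle$ and the original $\ell$ is ${\mathbb Z}$-linear on $M$, one has $\ell(pv)=p\,\ell(v)$, hence ${\rm ord}\,\ell(pv)=1+{\rm ord}\,\ell(v)$ for every $\ell\in L$. The second: for $v\in C(f)\cap{\mathbb Z}^n$ one has $w(pv)=p\,w(v)$, because $v\in\mu\Delta(f)$ if and only if $pv\in p\mu\Delta(f)$, so the smallest dilation of $\Delta(f)$ containing $pv$ is $p$ times the smallest one containing $v$.

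Granting these, I would argue as follows. Let $\xi=\sum_{u\in M_i(f)}A_ux^u\in L_i(b,c)$, so ${\rm ord}\,A_u\geq bw(u)+c$; by definition $\psi(\xi)=\sum_{v\in{\mathbb Z}^n}A_{pv}x^v$, with the convention that $A_{pv}=0$ unless $pv\in M_i(f)$. Fix $v\in{\mathbb Z}^n$ with $pv\in M_i(f)$. Since $C(f)$ is a cone containing $pv$, also $v\in C(f)\cap{\mathbb Z}^n$, so $w(pv)=p\,w(v)$ holds. The first identity gives $\inf_{\ell\in L}\{{\rm ord}\,\ell(v)\}=\inf_{\ell\in L}\{{\rm ord}\,\ell(pv)\}-1=-i-1$. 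For $1\leq i<a$ this says precisely $v\in M_{i+1}(f)$, and ${\rm ord}\,A_{pv}\geq bw(pv)+c=bp\,w(v)+c\geq bw(v)+c$ (using $b\geq 0$, $p\geq 2$, $w(v)\geq 0$), so $\psi(\xi)\in L_{i+1}(b,c)$. For $i=a$ one would get $v\in M_{a+1}(f)$, which is empty (as noted before the lemma, a consequence of (1.2): $p^av\in M$ forces ${\rm ord}\,\ell(v)\geq -a$); so no such $v$ exists and $\psi(L_a(b,c))=0$.

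The case of $B_i'$ is the same computation with the $\tilde{\gamma}$-factor tracked. For $\eta=\sum_{u\in M_i(f)}A_u\tilde{\gamma}^{Nw(u)}x^u\in B_i'$ ($A_u\in\widetilde{\Omega}_0$, $A_u\to 0$), one has $\psi(\eta)=\sum_v\bigl(A_{pv}\tilde{\gamma}^{N(p-1)w(v)}\bigr)\tilde{\gamma}^{Nw(v)}x^v$ over $v\in{\mathbb Z}^n$ with $pv\in M_i(f)$, using $w(pv)=p\,w(v)$. Since $Nw(v)$ is a nonnegative integer, $\tilde{\gamma}^{N(p-1)w(v)}$ is a nonnegative power of $\tilde{\gamma}$, so the new coefficient lies in $\widetilde{\Omega}_0$, has absolute value at most $|A_{pv}|$, and tends to $0$ as $v\to\infty$ (since then $pv\to\infty$); the index computation above then gives $\psi(\eta)\in B_{i+1}'$ for $1\leq i<a$ and $\psi(B_a')=0$. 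There is no real obstacle here; the one thing to watch is the direction of the weight estimate $bp\,w(v)+c\geq bw(v)+c$, which is exactly where $b\geq 0$ and $w(v)\geq 0$ (the latter valid because $v\in C(f)$) are used, the rest being bookkeeping with the definitions of $M_i(f)$, $\psi$, and the norm.
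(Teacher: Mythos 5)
Your proof is correct and follows essentially the same approach as the paper's: the core step in both is that $\psi$ shifts the index via $\mathrm{ord}\,\ell(v)=\mathrm{ord}\,\ell(pv)-1$, forcing $v\in M_{i+1}(f)$ whenever $pv\in M_i(f)$, and that $M_{a+1}(f)=\emptyset$ kills the $i=a$ case. The paper's proof records only this index shift and leaves the weight estimate and the $B_i'$ variant to the reader; you supply those details (the scaling $w(pv)=pw(v)$, the bound $bpw(v)+c\ge bw(v)+c$, and the $\tilde\gamma$-bookkeeping for $B_i'$), which is more complete but not a different route.
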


\begin{proof}
Let $\ell\in L$ and $pu\in M_i(f)$.  Since ${\rm ord}\,\ell(pu)\geq -i$,
it follows that ${\rm ord}\,\ell(u)\geq -i-1$.  By definition of $M_i(f)$ the
first inequality is an equality for some $\ell\in L$.  The second inequality
is then an equality also for that $\ell$, hence $u\in M_{i+1}(f)$.
\end{proof}

The operator $\alpha = \psi^r\circ F_0$ is an $\widetilde{\Omega}_0$-linear
(resp.\ $\Omega_0$-linear) endomorphism of the space $B'$ (resp.\ $L(b)$ for
$0<b\leq p/(p-1)$).  Furthermore, the operator $\alpha_0 =
\tau^{-1}\circ\psi\circ F_0$ is an $\widetilde{\Omega}_1$-linear (resp.\
$\Omega_1$-linear) endomorphism of $B'$ (resp.\ $L(b)$ for $0<b\leq p/(p-1)$)
and is an $\widetilde{\Omega}_0$-semilinear (resp.\ $\Omega_0$-semilinear)
endomorphism of $B'$ (resp.\ $L(b)$ for $0<b\leq p/(p-1)$).  It follows from
Serre\cite{S} that the operators $\alpha^m$ and $\alpha_0^m$ acting on $B'$
and $L(b)$ for $0<b\leq p/(p-1)$ have well defined traces.  In addition, the
Fredholm determinants $\det(I-t\alpha)$ and $\det(I-t\alpha_0)$ are well
defined and $p$-adically entire.  The Dwork trace formula asserts
\begin{equation}
S_m({\mathbb T}^n,f) = (q^m-1)^n{\rm Tr}(\alpha^m), 
\end{equation}
where $\alpha$ acts either on $B'$ or on some $L(b)$, $0<b\leq p/(p-1)$.  (The
nontrivial additive character implicit on the left-hand side is given by
\[ \Psi(x) = \theta(1)^{{\rm Tr}_{{\mathbb F}_q/{\mathbb F}_p}(x)}. )\]
Let $\delta$ be the operator on formal power series with constant term 1
defined by $g(t)^\delta = g(t)/g(qt)$.  Using the relationship
$\det(I-t\alpha) = \exp(-\sum_{m=1}^\infty {\rm Tr}(\alpha^m)t^m/m)$, equation
(2.3) is equivalent to
\begin{equation}
L({\mathbb T}^n,f;t)^{(-1)^{n-1}} = \det(I-t\alpha)^{\delta^n}.
\end{equation}

Let $\Gamma$ be the map on power series defined by
\[ \Gamma\bigg(\sum_{u\in{\mathbb Z}^n} A_ux^u\bigg) = \sum_{u\in M_0(f)}
A_ux^u. \] 
Define $\tilde{\alpha} = \Gamma\circ \alpha$, an endomorphism of $B_0'$ and
$L_0(b)$ for $0<b\leq p/(p-1)$.  The main technical result of this paper is
the following. 
\begin{theorem}
If $M_J\subseteq M$, then as operator on $B_0'$ and $L_0(b)$ for $0<b\leq
p/(p-1)$ the map $\tilde{\alpha}$ satisfies 
\[ S_m({\mathbb T}^n,f) = (q^m-1)^n{\rm Tr}(\tilde{\alpha}^m). \]
Equivalently, 
\[ L({\mathbb T}^n,f;t)^{(-1)^{n-1}} = \det(I-t\tilde{\alpha})^{\delta^n}. \]
\end{theorem}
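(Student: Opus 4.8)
The plan is to run a short filtration argument on the Dwork space $B'$ (and, identically, on each $L(b)$ for $0<b\leq p/(p-1)$), deducing the statement from the trace formula (2.3)--(2.4) already in hand for $\alpha$. I would use the level decomposition $B' = B'_0\oplus B'_1\oplus\cdots\oplus B'_a$ coming from the partition ${\mathbb Z}^n\cap C(f)=\bigcup_{i=0}^a M_i(f)$, and set $B'_{\geq 1}=B'_1\oplus\cdots\oplus B'_a$, so that $\Gamma$ is precisely the projection of $B'$ onto $B'_0$ along $B'_{\geq 1}$.

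First I would show that $B'_{\geq 1}$ is $\alpha$-stable and that $\alpha$ strictly raises the level there. Multiplication by $F_0\in L_0(p/(q(p-1)),0)$ maps each $B'_i$ into itself: if $u\in M\cap C(f)$ and $v\in M_i(f)$, then $u+v\in C(f)$ and, for every $\ell\in L$, ${\rm ord}\,\ell(u+v)\geq\min\{{\rm ord}\,\ell(u),{\rm ord}\,\ell(v)\}\geq -i$, with equality for $i\geq 1$ at an $\ell$ attaining the infimum defining $M_i(f)$ (which is attained, as noted in the text); hence $u+v\in M_i(f)$. By Lemma 2.2, $\psi$ sends $B'_i$ into $B'_{i+1}$ for $1\leq i<a$ and annihilates $B'_a$, so $\psi^r(B'_i)\subseteq B'_{i+r}$ for $i\geq 1$ (read as $0$ if $i+r>a$). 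Thus $\alpha=\psi^r\circ F_0$ maps each $B'_i$ with $i\geq 1$ into $B'_{i+r}$, and iterating, $\alpha^m$ maps $B'_i$ into $B'_{i+mr}$; since $mr\geq 1$, every ``diagonal block'' of $\alpha^m$ acting on $B'_{\geq 1}$ vanishes.

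Next I would identify $\tilde{\alpha}$ with the operator $\alpha$ induces on the quotient $B'/B'_{\geq 1}\cong B'_0$. Since $\alpha(B'_{\geq 1})\subseteq B'_{\geq 1}$, we have $\Gamma\circ\alpha\circ(I-\Gamma)=0$ on $B'$, and a straightforward induction then gives $\tilde{\alpha}^m=\Gamma\circ\alpha^m|_{B'_0}$, which is the $m$-th power of the quotient operator; in particular $\tilde{\alpha}$ and its powers are completely continuous on $B'_0$, so their traces and $\det(I-t\tilde\alpha)$ are defined. Because $\alpha^m$ is completely continuous and $B'=B'_{\geq 1}\oplus B'_0$ splits it into a stable subspace and the complementary quotient, Serre's theory gives ${\rm Tr}(\alpha^m;B')={\rm Tr}(\alpha^m;B'_{\geq 1})+{\rm Tr}(\tilde{\alpha}^m;B'_0)$; the first summand is the sum of the vanishing diagonal blocks of $\alpha^m$ over the finite decomposition $B'_{\geq 1}=B'_1\oplus\cdots\oplus B'_a$, hence is $0$. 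Therefore ${\rm Tr}(\alpha^m)={\rm Tr}(\tilde{\alpha}^m)$, and (2.3) becomes $S_m({\mathbb T}^n,f)=(q^m-1)^n\,{\rm Tr}(\tilde{\alpha}^m)$; the equivalent $L$-function identity follows from $\det(I-t\tilde{\alpha})=\exp(-\sum_{m\geq 1}{\rm Tr}(\tilde{\alpha}^m)t^m/m)$ exactly as (2.3) was converted to (2.4). The same reasoning applies verbatim with $B'$ replaced by $L(b)$.

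The argument is essentially formal once Lemma 2.2 is available; the only points needing care are the bookkeeping that $F_0$ preserves each level (so that $\alpha$ strictly raises the level on $B'_{\geq 1}$) and the appeal to Serre for additivity of the trace over this finite block decomposition and for the existence of the Fredholm determinants — but these are the same facts already used for $\alpha$ just above the statement, so I do not anticipate a genuine obstacle. The one conceptual point, flagged in the introduction, is simply that monomials with exponents outside $M$ live in $B'_{\geq 1}$ and are pushed strictly ``upward'' by Frobenius, so they contribute nothing to its trace.
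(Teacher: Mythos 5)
Your proof is correct and follows essentially the same route as the paper: the same filtration of $B'$ by the sets $M_i(f)$, the same observation that $F_0$ preserves each level while $\psi^r$ strictly raises it (by Lemma 2.2), and the same conclusion that $\alpha$ is nilpotent on $B'_{\geq 1}$ so its trace there vanishes. The only cosmetic difference is that you argue directly in terms of traces and the block decomposition, whereas the paper phrases the last step through Fredholm determinants and the quotient $B'/\bigcup_{i\geq 1}B_i'$ via Serre's Proposition 9.
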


\begin{proof}
To fix ideas, we work with the space $B'$.  Note that if $u\in M_0(f)$ and
$v\in M_i(f)$, $1\leq i\leq a$, then $u+v\in M_i(f)$.  This shows that
multiplication by $F$ and $F_0$ are stable on $B_i'$ for $i=1,\dots,a$.
Lemma~2.2 then implies that $\alpha(B_i')\subseteq B_{i+1}'$ for
$i=1,\dots,a-1$ and $\alpha(B'_a) = 0$.  It follows that any power of $\alpha$
acting on $\bigcup_{i=1}^a B'_i$ has trace $0$, so on $\bigcup_{i=1}^a B'_i$
we have $\det(I-t\alpha)=1$.  From \cite[Proposition 9]{S} we then get
\[ \det(I-t\alpha\mid B') = \det(I-t\alpha\mid B'/\bigcup_{i=1}^a B'_i). \]
Under the Banach space isomorphism $B_0'\cong B'/\bigcup_{i=1}^a B'_i$, the
operator $\tilde{\alpha}$ is identified with the operator induced by $\alpha$
on $B'/\bigcup_{i=1}^a B'_i$.  This proves the theorem.
\end{proof}

\section{First applications}

In \cite{AS1,AS2} we made use of the following idea.  First we found $p$-adic
estimates for the entries of the (infinite) matrix of the Frobenius operator
$\alpha$ relative to the basis $\{x^u\mid u\in {\mathbb Z}^n\cap C(f)\}$ for
$L(p/(p-1))$.  These estimates were expressed in terms of the weight function
$w$ (see \cite[Eq.\ (3.8)]{AS1}).  We then used the counting function 
\[ W(k) = {\rm card}\{u\in{\mathbb Z}^n\cap C(f)\mid w(u) = k/N\} \]
to calculate the number of basis elements giving rise to matrix
coefficients having a given $p$-divisibility.  This allowed us to estimate the
$p$-divisibility of the coefficients in the power series $\det(I-t\alpha)$.
Using (2.4) we were then able to deduce information about the exponential sum
and its $L$-function.  

By Theorem 2.5, we can replace the operator $\alpha$ acting
on $L(p/(p-1))$ by the operator $\tilde{\alpha}$ acting on $L_0(p/(p-1))$.
The space $L_0(p/(p-1))$ has basis $\{x^u\mid u\in M_0(f)\}$ and the
corresponding counting function is 
\begin{equation}
W_0(k) = {\rm card}\{u\in M_0(f)\mid w(u) = k/N\}. 
\end{equation}
One can then repeat the arguments of \cite{AS1,AS2} with $W_0(k)$ in place of
$W(k)$, which leads to sharper results.  Typically one would take $M=M_J$ to
get the best estimates.

For example, taking $M=M_J$ and arguing as in \cite[Section 4]{AS1} leads to
the following result, which improves the first inequality of
\cite[Theorem~1.8]{AS1}.  (One can similarly improve the second inequality of
\cite[Theorem~1.8]{AS1}, but we have not worked out the details.)
\begin{theorem}
The following inequality holds:
\[ 0\leq \deg L({\mathbb T}^n,f;t)^{(-1)^{n-1}}\leq n!\,V(f)/[{\mathbb
  Z}^n:M_J], \]
where $V(f)$ denotes the volume of $\Delta(f)$ relative to Lebesgue measure on
${\mathbb R}^n$.  
\end{theorem}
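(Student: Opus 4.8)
The plan is to derive the theorem from Theorem~2.5 by running the argument of \cite[Section~4]{AS1} with the operator $\alpha$ on $L(p/(p-1))$ replaced throughout by $\tilde\alpha$ on $L_0(p/(p-1))$, and with the weight-counting function $W$ replaced by $W_0$. First I would specialize Theorem~2.5 to $M=M_J$, obtaining
\[ L({\mathbb T}^n,f;t)^{(-1)^{n-1}}=\det(I-t\tilde\alpha)^{\delta^n}, \]
where $\tilde\alpha$ is completely continuous on the Banach space $L_0(p/(p-1))$ and the natural (rescaled) monomial basis of that space is indexed by $M_0(f)=M_J\cap C(f)$. Thus the only structural change relative to \cite{AS1} is that the index set ${\mathbb Z}^n\cap C(f)$ has been cut down to the sublattice slice $M_J\cap C(f)$; since $M_0(f)\subseteq{\mathbb Z}^n\cap C(f)$ we have $W_0(k)\le W(k)$ for all $k$, so the substitution can only improve the resulting bounds.

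Second, I would check that the matrix estimates of \cite[Eq.~(3.8)]{AS1} are unaffected. Writing $\tilde\alpha=\Gamma\circ\psi^r\circ F_0$ with $F_0\in L_0\bigl(p/(q(p-1)),0\bigr)$, the $(v,u)$-entry of $\tilde\alpha$ in this monomial basis (for $u,v\in M_0(f)$) is, up to a fixed power of $\tilde\gamma$, the coefficient of $x^{qv-u}$ in $F_0$; using the superadditivity $w(qv-u)\ge qw(v)-w(u)$ of the weight function together with the estimate (2.1) on the coefficients of $\theta$, its valuation is bounded below by the identical expression in $w(u)$ and $w(v)$ that occurs in \cite{AS1}. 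That bound referred to nothing about the ambient lattice beyond the weight function, which is untouched. Hence, repeating the Fredholm-determinant estimate of \cite[Section~4]{AS1} word for word, the Newton polygon of $\det(I-t\tilde\alpha)$ with respect to ${\rm ord}_q$ lies on or above the polygon whose horizontal segment of slope $k/N$ has length $W_0(k)$.

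Third, I would run the $\delta^n$-bookkeeping of \cite[Section~4]{AS1} on this Newton-polygon bound. Since $\det(I-t\tilde\alpha)^{\delta^n}=\prod_{j=0}^{n}\det(I-q^{j}t\tilde\alpha)^{(-1)^{j}\binom{n}{j}}$ and the left-hand side is a polynomial, that argument produces both the lower bound $0\le\deg L({\mathbb T}^n,f;t)^{(-1)^{n-1}}$ (equivalently, that $L^{(-1)^{n-1}}$ is a polynomial) and the upper bound $\deg L({\mathbb T}^n,f;t)^{(-1)^{n-1}}\le\Phi(W_0)$, where $\Phi$ is the same linear functional of the counting function that in \cite{AS1} evaluates to $\Phi(W)=n!\,V(f)$, namely the $n$-th finite difference $\sum_{i=0}^{n}(-1)^{n-i}\binom{n}{i}\sum_{k\le iN}W_0(k)$ of the Ehrhart-type count $i\mapsto\#\bigl(M_J\cap i\Delta(f)\bigr)$. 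To finish I would note that $\Delta(f)$ is $M_J$-integral, its vertices lying in $J\subseteq{\mathbb Z}\langle J\rangle\subseteq M_J$, so $i\mapsto\#(M_J\cap i\Delta(f))$ is a genuine polynomial of degree $n$ whose leading coefficient is the $M_J$-normalized volume of $\Delta(f)$, that is $V(f)/[{\mathbb Z}^n:M_J]$; its $n$-th finite difference is therefore $\Phi(W_0)=n!\,V(f)/[{\mathbb Z}^n:M_J]$, which is the assertion. (If ${\mathbb R}\langle J\rangle\ne{\mathbb R}^n$ then $\Delta(f)$ has dimension $<n$, so $\Phi(W_0)=0$ while $V(f)=0$ and $[{\mathbb Z}^n:M_J]=\infty$, and the inequality reads $\deg L^{(-1)^{n-1}}=0$; I would settle this degenerate case at the outset.)

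The step I expect to be the main obstacle is carrying over, line by line, the Newton-polygon-plus-$\delta^n$ argument of \cite[Section~4]{AS1} and confirming that with $W_0$ in place of $W$ it still terminates in the clean functional $\Phi$ — in particular, that the $M_J$-integrality of $\Delta(f)$ is exactly what makes $\Phi(W_0)$ collapse to $n!$ times the leading Ehrhart coefficient with no residual boundary contributions. The reduction via Theorem~2.5 and the invariance of the matrix estimates under shrinking the monomial index set are, by comparison, routine.
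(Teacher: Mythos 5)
Your proposal is correct and takes essentially the same route as the paper, which simply specializes Theorem~2.5 to $M=M_J$ and refers back to the argument of \cite[Section~4]{AS1} with the counting function $W$ replaced by $W_0$. You have filled in what that substitution entails — the matrix estimates depend only on the weight function and survive the shrinkage of the index set to $M_0(f)$, and the Ehrhart count over $M_J$ produces $n!\,V(f)/[{\mathbb Z}^n:M_J]$ because $\Delta(f)$ is $M_J$-integral — but the structure of the argument is the one the authors intended.
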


Suppose now that $f\in{\mathbb F}_q[x_1,\dots,x_n]$ and let $\omega(f)$ be the
smallest positive real (hence rational) number such that $\omega(f)\Delta(f)$,
the dilation of $\Delta(f)$ by the factor~$\omega(f)$, contains a point of
$M_J\cap ({\mathbb N}_+)^n$.  We prove the following strengthening of
\cite[Theorem~1.2]{AS2}.
\begin{theorem}
If $f$ is not a polynomial in some proper subset of $\{x_1,\dots,x_n\}$, then
\[ {\rm ord}_q\,S_1({\mathbb A}^n,f)\geq \omega(f). \]
\end{theorem}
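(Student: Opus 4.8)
The strategy is to reduce the statement about $S_1(\mathbb{A}^n,f)$ to a statement about exponential sums on tori, where Theorem~2.5 and the counting-function machinery of Section~3 apply. First I would use the standard toric decomposition $\mathbb{A}^n = \bigcup_{A\subseteq\{1,\dots,n\}}\mathbb{T}_A$ to write $S_1(\mathbb{A}^n,f) = \sum_A S_1(\mathbb{T}_A, f_A)$, where $f_A$ is the Laurent polynomial obtained from $f$ by discarding all monomials involving a variable $x_i$ with $i\notin A$. Since $f$ is not a polynomial in a proper subset of the variables, the term $A=\{1,\dots,n\}$ genuinely involves all $n$ variables; for the proper subsets $A$, the restriction $f_A$ is a polynomial in $|A|<n$ variables, and the inductive/earlier structure should let us handle those. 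The goal is to show ${\rm ord}_q\,S_1(\mathbb{T}_A, f_A)\geq\omega(f)$ for every $A$, which then gives the claimed bound for the sum.

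\medskip

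\noindent The heart of the matter is the full torus $\mathbb{T}^n$. Here I would invoke the trace formula in the sharpened form of Theorem~2.5: $S_1(\mathbb{T}^n,f) = (q-1)^n\,{\rm Tr}(\tilde\alpha)$, where $\tilde\alpha = \Gamma\circ\alpha$ acts on $L_0(p/(p-1))$, which has basis $\{x^u\mid u\in M_0(f)\} = \{x^u\mid u\in M_J\cap C(f)\}$ (taking $M=M_J$). The trace ${\rm Tr}(\tilde\alpha)$ is a sum of diagonal matrix entries $\langle x^u\mid \tilde\alpha\mid x^u\rangle$ over $u\in M_J\cap C(f)$, and the estimate $F_0\in L_0(p/(q(p-1)),0)$ combined with the action of $\psi^r$ (which divides the exponent by $q$) shows that this diagonal entry has ${\rm ord}$ at least $\frac{p}{q(p-1)}\cdot(q-1)w(u) \ge$ something growing with $w(u)$; more precisely the relevant bound is ${\rm ord}\langle x^u\mid\tilde\alpha\mid x^u\rangle \ge w(u)\cdot\frac{p}{q(p-1)}(q-1)$, and after accounting for the factor $(q-1)^n = \prod(q-1)$ and converting to ${\rm ord}_q$, one gets a contribution of at least $w(u)$ to ${\rm ord}_q$ for each $u$. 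The point then is that for $u\in M_J\cap C(f)\cap(\mathbb{N}_+)^n$ — the only $u$ that actually contribute to the trace after the $\mathbb{A}^n$ bookkeeping, since monomials supported off $(\mathbb{N}_+)^n$ cancel in the alternating sum over the toric strata — we have $w(u)\geq\omega(f)$ by the very definition of $\omega(f)$ as the smallest dilation factor of $\Delta(f)$ meeting $M_J\cap(\mathbb{N}_+)^n$. This is exactly where passing from $\mathbb{Z}^n\cap C(f)$ to $M_J\cap C(f)$ pays off: it eliminates precisely the low-weight lattice points that would otherwise violate the bound.

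\medskip

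\noindent The cleanest way to organize the cancellation is to work with $S_1(\mathbb{A}^n,f)$ directly via the Dwork-style trace formula on affine space rather than stratum-by-stratum: one has an identity of the shape $S_1(\mathbb{A}^n,f) = \sum_{u}(\text{diagonal entry indexed by }u)$ where $u$ ranges over $M_J\cap C(f)\cap(\mathbb{N}_+)^n$, the set $(\mathbb{N}_+)^n$ appearing because the affine trace formula (as in \cite{AS2}) involves the operator with the factor $\prod_{i}x_i$ built in, or equivalently because in $(q^m-1)^n{\rm Tr}$ the geometric-series expansion in each variable picks out strictly positive exponents. Each such diagonal entry has ${\rm ord}_q \ge w(u) \ge \omega(f)$, and summing gives the result.

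\medskip

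\noindent \textbf{Main obstacle.} The delicate point is justifying the bookkeeping that restricts the contributing lattice points to $M_J\cap C(f)\cap(\mathbb{N}_+)^n$ and simultaneously guarantees the ${\rm ord}_q$-estimate $\ge w(u)$ for each contributing diagonal entry. Getting the normalization right — tracking the factor $\tilde\gamma^{Nw(u)}$, the $(q-1)^n$ (or rather the correct affine analogue of it), and the exact exponent in $F_0\in L_0(p/(q(p-1)),0)$ — is where the estimate could slip by a constant. The conceptual content, though, is transparent: Theorem~2.5 lets us throw away all basis elements $x^u$ with $u\notin M_J$, and the lattice points of $M_J$ that survive inside the positive orthant all have weight $\ge\omega(f)$ by definition, so the trace, hence the exponential sum, inherits that divisibility. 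I expect the argument to parallel \cite[Section~4]{AS1} and \cite[proof of Theorem~1.2]{AS2} almost verbatim, with $W_0$ in place of $W$ and $\omega(f)$ (defined via $M_J$) in place of its earlier, coarser analogue.
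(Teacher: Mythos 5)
Your overall strategy matches the paper's: both reduce to the argument of \cite[Section~4]{AS2}, replacing the Banach space $L(p/(p-1))$ by $L_0(p/(p-1))$ built from $M=M_J$, so that the weight of every surviving lattice point in $(\mathbb{N}_+)^n$ is $\geq\omega(f)$, and both recognize Theorem~2.5 as the tool that justifies discarding the exponents outside $M_J$. The conceptual heart of your third paragraph is exactly right.

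Two remarks. First, the claim in your opening paragraph --- that the goal is to show $\mathrm{ord}_q\,S_1(\mathbb{T}_A,f_A)\geq\omega(f)$ for each stratum $A$ individually --- is false and not what the argument proves. Already $f=x_1^p+x_2$ has $\omega(f)=2$ while $S_1(\mathbb{T}_{\{1\}},x_1^p)=-1$ has $\mathrm{ord}_q=0$. The divisibility only emerges after the inclusion--exclusion over strata is assembled into the affine trace formula; your third paragraph abandons the stratum-by-stratum bound and does the right thing, but the first paragraph should not present it as the goal. Second, the paper isolates precisely which step of the \cite{AS2} argument must be re-verified for the restricted Banach space, namely the analogue of \cite[Eq.~(3.13)]{AS2}: the trace formula
\[ S_m(\mathbb{T}^{n-|A|},f_A)=(q^m-1)^{n-|A|}\mathrm{Tr}\bigl(\tilde\alpha_A\mid L_{0,(A)}(p/(p-1))\bigr), \]
where $L_{0,(A)}$ is the subspace of $L_0(p/(p-1))$ supported on exponents with $u_i=0$ for $i\in A$. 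The paper observes that this holds, by Theorem~2.5 applied to $f_A$, provided $M_{J_A}\subseteq M_J\cap\mathbb{R}^{n-|A|}$, which is immediate. Your proposal gestures at this (``the inductive/earlier structure should let us handle those'') but does not name the lemma; pinning down that single inclusion is what turns your sketch into a complete proof.
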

As an example of Theorem~3.3, consider the polynomial
\[ f(x_1,x_2) = x_1x_2^4 + x_1^7x_2^3 + x_1^{13}x_2^2. \]
If $p\neq 5$, then $M_J = {\mathbb Z}^2$; so $\omega(f) = 7/25$, which gives
the estimate of \cite[Theorem~1.2]{AS2}.  Theorem~3.3 gives an improvement
when $p=5$.  In this case,
\[ M_J = \{(u_1,u_2)\in{\mathbb Z}^2\mid \text{$u_1+6u_2$ is divisible by
  $25$}\} \]
so $\omega(f) = 1$.

\begin{proof}[Proof of Theorem~$3.3$]

One can repeat the proof given in \cite[Section 4]{AS2}.  The main point to
check is that \cite[Eq.\ (3.13)]{AS2} still holds when we replace the $p$-adic
Banach space used there (namely, $L(p/(p-1))$) by the Banach space
$L_0(p/(p-1))$ corresponding to the choice of lattice $M=M_J$.  It then
follows that \cite[Eq.~(3.14)]{AS2} holds when using the Banach space
$L_0(p/(p-1))$.  This allows one to repeat the argument of
\cite[Section~4]{AS2} {\it mutatis mutandis}.  

For any subset $A\subseteq\{1,\dots,n\}$, let $f_A$ be the polynomial obtained
from $f$ by setting $x_i=0$ for $i\in A$.  Define
\[ L_{0,(A)}\bigg(\frac{p}{p-1}\bigg) = \bigg\{ \sum_{u\in M_0(f)} A_ux^u\in
  L_0\bigg(\frac{p}{p-1}\bigg)\bigg| \text{$A_u=0$ if $u_i\neq 0$ for some
 $i\in A$}\bigg\}. \] 
Let $\Gamma_A:L_0(p/(p-1))\to L_{0,(A)}(p/(p-1))$ be defined by
\[ \Gamma_A\bigg(\sum_{u\in M_0(f)} A_ux^u\bigg) = \sum_{\substack{u\in
 M_0(f)\\ \text{$u_i=0$ for $i\in A$}}} A_ux^u \]
 and let $\tilde{\alpha}_A$ denote the endomorphism $\Gamma_A\circ
\tilde{\alpha}$ of $L_{0,(A)}(p/(p-1))$.  We need to check that (see
 \cite[Eq.\ (3.13)]{AS2}) 
\begin{equation}
S_m({\mathbb T}^{n-|A|},f_A) = (q^m-1)^{n-|A|}{\rm Tr}(\tilde{\alpha}_A\mid 
L_{0,(A)}(p/(p-1))), 
\end{equation}
where $|A|$ denotes the cardinality of $A$.  Let $J_A\subseteq{\mathbb
  Z}^{n-|A|}$ denote the set of exponents of $f_A$:
\[ f_A = \sum_{j\in J_A} a_jx^j\in{\mathbb F}_q[\{x_i\}_{i\not\in A}]. \]
The power series in $L_{0,(A)}(p/(p-1))$ have exponents in the
lattice $M_J\cap{\mathbb R}^{n-|A|}$.  By Theorem~2.5, Eq.~(3.4) will hold
provided $M_{J_A}\subseteq M_J\cap{\mathbb R}^{n-|A|}$.  But this is clear.  
\end{proof}

We derive a generalization of Theorem~1.3 from Theorem~3.3.  Let
$f_1,\dots,f_r\in{\mathbb F}_q[x_1,\dots,x_n]$ and let $N(f_1,\dots,f_r)$
denote the number of solutions in ${\mathbb F}_q$ to the system
$f_1=\dots=f_r=0$.  Let $y_1,\dots,y_r$ be additional variables and set
\[ F = \sum_{i=1}^r y_if_i\in{\mathbb F}_q[x_1,\dots,x_n,y_1,\dots,y_r]. \]
It is easily seen that
\[ S_1({\mathbb A}^{n+r},F) = q^rN(f_1,\dots,f_r). \]
Applying Theorem~3.3 to $F$ gives the following result, of which Theorem~1.3
is the special case $r=1$.
\begin{corollary}
${\rm ord}_q\,N(f_1,\dots,f_r)\geq\omega(F)-r$.
\end{corollary}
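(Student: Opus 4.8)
The plan is to apply Theorem~3.3 directly to the Laurent polynomial $F=\sum_{i=1}^r y_if_i$, viewed as an element of ${\mathbb F}_q[x_1,\dots,x_n,y_1,\dots,y_r]$. The first step is to verify the identity $S_1({\mathbb A}^{n+r},F)=q^rN(f_1,\dots,f_r)$ stated in the text just before the corollary. For this I would write
\[ S_1({\mathbb A}^{n+r},F)=\sum_{x\in{\mathbb F}_q^n}\ \prod_{i=1}^r\Bigl(\sum_{y_i\in{\mathbb F}_q}\Psi(y_if_i(x))\Bigr), \]
and observe that for fixed $x$ the map $t\mapsto\Psi(f_i(x)t)$ is a nontrivial additive character of ${\mathbb F}_q$ when $f_i(x)\neq0$, so the $i$-th inner sum equals $q$ if $f_i(x)=0$ and $0$ otherwise. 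Hence the product over $i$ equals $q^r$ exactly when $x$ is a common zero of $f_1,\dots,f_r$ and $0$ otherwise, and summing over $x$ gives $q^rN(f_1,\dots,f_r)$.

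Granting this, the corollary is immediate: Theorem~3.3, applied with $n$ replaced by $n+r$ and $f$ replaced by $F$, gives ${\rm ord}_q\,S_1({\mathbb A}^{n+r},F)\geq\omega(F)$, and since ${\rm ord}_q\,q=1$ we get
\[ {\rm ord}_q\,N(f_1,\dots,f_r)={\rm ord}_q\,S_1({\mathbb A}^{n+r},F)-r\geq\omega(F)-r. \]

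The only point requiring attention is the hypothesis of Theorem~3.3, that $F$ is not a polynomial in a proper subset of $\{x_1,\dots,x_n,y_1,\dots,y_r\}$, and this is where I would be careful. Let $M$ denote the prime-to-$p$ saturation of the subgroup of ${\mathbb Z}^{n+r}$ generated by the exponents of $F$. If $F$ fails to involve one of its variables, then every exponent of $F$, hence all of $M$ and all of $\Delta(F)$, lies in a coordinate hyperplane, so no dilate of $\Delta(F)$ can contain a point of $M\cap({\mathbb N}_+)^{n+r}$ and $\omega(F)$ is undefined; thus there is nothing to prove outside the case where $F$ genuinely involves all $n+r$ variables. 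Conversely, when $F$ involves every variable, the sum $u^*$ of the exponent vectors of all monomials of $F$ lies in ${\mathbb Z}\langle J\rangle\cap C(F)\cap({\mathbb N}_+)^{n+r}$ (here $J$ is the exponent set of $F$), so $u^*\in w(u^*)\Delta(F)$ and $\omega(F)\leq w(u^*)<\infty$ is defined; in this case the hypothesis of Theorem~3.3 holds and the one-line argument above applies. (If one insists on treating the degenerate cases, they reduce to smaller parameters: a variable $x_j$ occurring in no $f_i$ is free and multiplies $N(f_1,\dots,f_r)$ by $q$, while an identically zero $f_i$ may be deleted from the system.) Since Theorem~3.3 does all the real work, I expect no genuine obstacle here; the only thing one must not overlook is this equivalence between ``$\omega(F)$ is defined'' and ``Theorem~3.3 applies to $F$.''
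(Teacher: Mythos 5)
Your proof is correct and follows exactly the paper's route: form the auxiliary polynomial $F=\sum_i y_if_i$, verify $S_1({\mathbb A}^{n+r},F)=q^rN(f_1,\dots,f_r)$ by summing the inner character sums over each $y_i$, and apply Theorem~3.3. The only addition you make over the paper's one-line derivation is a careful check that the hypothesis of Theorem~3.3 (that $F$ involves all $n+r$ variables) is exactly the condition for $\omega(F)$ to be finite, which the paper leaves implicit; this is a worthwhile clarification and does not change the method.
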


\section{Nondegeneracy relative to a lattice}

The results of \cite{AS3,AS4} are cohomological in nature and require a more
detailed development.  Suppose that ${\mathbb Z}\langle J\rangle$ has rank
$k$.  Let $M$ be a lattice, ${\mathbb Z}\langle
J\rangle\subseteq M\subseteq{\mathbb Z}^n\cap{\mathbb R}\langle J\rangle$, and
choose a basis of linear forms 
$\{\ell_i\}_{i=1}^k$ for $L = {\rm Hom}_{\mathbb Z}(M,{\mathbb Z})$.  We
define ``differential operators'' $\{E_{\ell_i}\}_{i=1}^k$ on the ring
${\mathbb F}_q[x^u\mid u\in M]$ by linearity and the formula 
\[ E_{\ell_i}(x^u) = \ell_i(u)x^u. \]
This definition is motivated by the fact that if we write
\[ \ell_i(u_1,\dots,u_n) = \sum_{j=1}^n a_{ij}u_j, \]
where $u=(u_1,\dots,u_n)\in M\subseteq{\mathbb Z}^n$ and the $a_{ij}$ are
rational numbers, and put $E_{\ell_i} = \sum_{j=1}^n
a_{ij}x_j\partial/\partial x_j$, then in characteristic $0$
\[ E_{\ell_i}(x^u) = \sum_{j=1}^n a_{ij}x_j\frac{\partial}{\partial x_j}(x^u)
= \ell_i(u)x^u. \]

Let $f$ be given by (1.1) and let $\sigma$ be a subset of $\Delta(f)$.  Define
\[ f_\sigma =\sum_{j\in J\cap\sigma} a_jx^j. \]
We say that $f$ is {\it nondegenerate relative to\/} $(\Delta(f),M)$ if for
every face $\sigma$ of $\Delta(f)$ that does not contain the origin, the
Laurent polynomials $\{E_{\ell_i}(f_\sigma)\}_{i=1}^k$ have no common zero in
$(\bar{\mathbb F}_q^\times)^n$, where $\bar{\mathbb F}_q$ denotes an algebraic
closure of ${\mathbb F}_q$.  Note that the condition ${\mathbb Z}\langle
J\rangle\subseteq M$ guarantees that all $f_\sigma$ lie in ${\mathbb
  F}_q[x^u\mid u\in M]$, so the $E_{\ell_i}(f_\sigma)$ are defined.  Note also
that this definition depends only on $M$ and not on the choice of basis
$\{\ell_i\}_{i=1}^k$ for $L$: any two bases for $L$ are related by a matrix in
${\rm GL}(k,{\mathbb Z})$.  (We remark that this idea to replace the
differential operators $x_i\partial/\partial x_i$ by certain linear
combinations with coefficients that are not $p$-integral appears in
nascent form in Dwork\cite{DW1}, where it was needed to calculate the
$p$-adic cohomology of smooth hypersurfaces of degree divisible by $p$.)

The condition used in \cite{AS3}, that $f$ be ``nondegenerate relative to
$\Delta(f)$,'' is equivalent to the condition that $f$ be nondegenerate
relative to $(\Delta(f),{\mathbb Z}^n\cap{\mathbb R}\langle J\rangle)$ in the
sense of the present definition.  We 
make the relationship between this definition and our earlier one more
explicit.  There is a basis ${\bf e}_1,\dots,{\bf e}_n$ for ${\mathbb Z}^n$
and positive integers $d_1,\dots,d_k$, $k\leq n$, such that $d_1{\bf
  e}_1,\dots,d_k{\bf e}_k$ is a basis for $M$.  After a coordinate change
on~${\mathbb T}^n$, we may take ${\bf e}_1,\dots,{\bf e}_n$ to be the standard
basis for ${\mathbb Z}^n$.  This implies that there exists a Laurent
polynomial 
\[ g = \sum_{c\in C} b_cx^c\in{\mathbb F}_q[x_1^{\pm 1},\dots,x_k^{\pm 1}], \]
where $C$ is a finite subset of ${\mathbb Z}^k$, such that
\begin{equation}
f(x_1,\dots,x_n) = g(x_1^{d_1},\dots,x_k^{d_k}).
\end{equation}
Note that (4.1) implies
\begin{equation}
[{\mathbb Z}\langle C\rangle:{\mathbb Z}\langle J\rangle] = d_1\cdots d_k\; ( =
[{\mathbb Z}^n\cap{\mathbb R}\langle J\rangle:M]).
\end{equation}

{\it Remark}.  When we choose $M=M_J$, it follows from (1.2) that each
$d_i$ is a power of $p$.  In this case, the exponential sums
associated to $f$ and $g$ are identical.

\begin{proposition}
The Laurent polynomial $f$ is nondegenerate relative to $(\Delta(f),M)$ if
and only if $g$ is nondegenerate relative to $(\Delta(g),{\mathbb Z}^k)$.
\end{proposition}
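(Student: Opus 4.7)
The plan is to make the differential operators $E_{\ell_i}$ explicit in the chosen coordinates, transfer them through the substitution $f(x) = g(x_1^{d_1},\dots,x_k^{d_k})$, and then exploit the fact that the map $(x_1,\dots,x_k)\mapsto(x_1^{d_1},\dots,x_k^{d_k})$ is surjective on $(\bar{\mathbb F}_q^\times)^k$.

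First I would choose the basis of $L$ dual to the basis $d_1{\bf e}_1,\dots,d_k{\bf e}_k$ of $M$: the resulting $\ell_i$ extends to the function on ${\mathbb Z}^n\cap{\mathbb R}\langle J\rangle$ given by $\ell_i(u_1,\dots,u_n)=u_i/d_i$ (recall that in these coordinates $u_{k+1}=\cdots=u_n=0$ on ${\mathbb R}\langle J\rangle$, and $d_i$ divides $u_i$ whenever $u\in M$). Since the definition of nondegeneracy is independent of the basis of $L$, this choice costs nothing. With this basis, for any exponent $j\in J$, writing $j_i=d_ic_i$ and $y=(x_1^{d_1},\dots,x_k^{d_k})$, we have $E_{\ell_i}(x^j)=c_ix^j=(y_i\,\partial/\partial y_i)(y^c)$ evaluated at $y_i=x_i^{d_i}$.

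Next I would set up the face correspondence. The linear isomorphism $L_d\colon{\mathbb R}^k\xrightarrow{\sim}{\mathbb R}\langle J\rangle$, $(c_1,\dots,c_k)\mapsto(d_1c_1,\dots,d_kc_k,0,\dots,0)$, carries $\Delta(g)$ bijectively onto $\Delta(f)$, takes $C$ onto $J$, and fixes the origin. Hence $\sigma\mapsto\sigma':=L_d^{-1}(\sigma)$ is a bijection between the faces of $\Delta(f)$ not containing the origin and the faces of $\Delta(g)$ not containing the origin, and under this bijection
\[
f_\sigma(x) = g_{\sigma'}(x_1^{d_1},\dots,x_k^{d_k}).
\]
Combining with the computation above, I get the key identity
\[
E_{\ell_i}(f_\sigma)(x) = \bigl(y_i\,\partial g_{\sigma'}/\partial y_i\bigr)(x_1^{d_1},\dots,x_k^{d_k}).
\]

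Finally I would translate the common-zero condition. The polynomial $f_\sigma$ does not involve $x_{k+1},\dots,x_n$, so the $E_{\ell_i}(f_\sigma)$ have a common zero in $(\bar{\mathbb F}_q^\times)^n$ iff they have a common zero in $(\bar{\mathbb F}_q^\times)^k$ in the variables $x_1,\dots,x_k$. By the identity just displayed, this happens iff the Laurent polynomials $y_i\,\partial g_{\sigma'}/\partial y_i$, $i=1,\dots,k$, have a common zero in $(\bar{\mathbb F}_q^\times)^k$, which is precisely the nondegeneracy condition for $g_{\sigma'}$ relative to $(\Delta(g),{\mathbb Z}^k)$ (with the standard basis dual to the coordinate basis of ${\mathbb Z}^k$). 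Surjectivity of $t\mapsto t^{d_i}$ on $\bar{\mathbb F}_q^\times$ is what guarantees equivalence, not just implication, in both directions. The face-correspondence bijection then gives the proposition.

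The main obstacle is nothing conceptually deep—just bookkeeping. The one point that requires care is making sure that, after the coordinate change making $\{{\bf e}_i\}$ the standard basis, the dual basis $\{\ell_i\}$ really does extend to the ``$u_i/d_i$'' functions on all of ${\mathbb Z}^n\cap{\mathbb R}\langle J\rangle$ (using the extension rule $\ell(u)=p^{-a}\ell(p^au)$ from Section 2), and that the face bijection $\sigma\leftrightarrow\sigma'$ respects the condition ``does not contain the origin.''
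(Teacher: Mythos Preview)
Your proof is correct and follows essentially the same route as the paper: choose the dual basis $\ell_i(u)=d_i^{-1}u_i$, set up the face bijection via the linear map $(u_1,\dots,u_k)\mapsto(d_1^{-1}u_1,\dots,d_k^{-1}u_k)$, and verify $E_{\ell_i}(f_\sigma)(x)=E_{\ell'_i}(g_{\sigma'})(x_1^{d_1},\dots,x_k^{d_k})$. You are slightly more explicit than the paper in invoking surjectivity of $t\mapsto t^{d_i}$ on $\bar{\mathbb F}_q^\times$ for the equivalence; note also that your worry about the $p^{-a}$-extension rule is unnecessary here, since the $E_{\ell_i}$ need only be applied to monomials with exponents in $M$.
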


\begin{proof}
Equation (4.1) implies that there is a one-to-one correspondence between the
faces of $\Delta(f)$ and the faces of $\Delta(g)$.  Specifically, the face
$\sigma$ of $\Delta(f)$ corresponds to the face $\sigma'$ of $\Delta(g)$
defined by
\[ \sigma' = \{(d_1^{-1}u_1,\dots,d_k^{-1}u_k)\in{\mathbb R}^k\mid
(u_1,\dots,u_k)\in\sigma\}. \]
Furthermore, we have 
\[ f_\sigma(x_1,\dots,x_k) = g_{\sigma'}(x_1^{d_1},\dots,x_k^{d_k}). \]
Using $u_1,\dots,u_k$ as coordinates on ${\mathbb Z}^k$, we may take as basis
for ${\rm Hom}_{\mathbb Z}({\mathbb Z}^k,{\mathbb Z})$ the linear forms
$\{\ell'_i\}_{i=1}^k$ defined by
\[ \ell'_i(u_1,\dots,u_k) = u_i \]
and we may take as basis for $L = {\rm Hom}_{\mathbb Z}(M,{\mathbb Z})$ the
linear forms $\{\ell_i\}_{i=1}^k$ defined by
\[ \ell_i(u_1,\dots,u_k) = d_i^{-1}u_i. \]
It is straightforward to check that for $i=1,\dots,k$, 
\[ E_{\ell_i}(f_\sigma)(x_1,\dots,x_k) =
E_{\ell'_i}(g_{\sigma'})(x_1^{d_1},\dots,x_k^{d_k}). \] 
This implies the proposition.
\end{proof}

\begin{lemma}
Put $[{\mathbb Z}^n\cap{\mathbb R}\langle J\rangle:M_J] = p^a$ and let
$M\subseteq{\mathbb Z}^n\cap{\mathbb R}\langle J\rangle$ be a lattice 
containing ${\mathbb Z}\langle J\rangle$.  Then $M\subseteq M_J$ if and only
if $p^a\mid [{\mathbb Z}^n\cap{\mathbb R}\langle J\rangle:M]$.
\end{lemma}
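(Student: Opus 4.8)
The plan is to translate the statement into one about the finite abelian group
\[
G = (\mathbb{Z}^n\cap\mathbb{R}\langle J\rangle)\big/\mathbb{Z}\langle J\rangle .
\]
First I would observe that, since $\mathbb{Z}^n\cap\mathbb{R}\langle J\rangle$ is the saturation of $\mathbb{Z}\langle J\rangle$ in $\mathbb{Z}^n$, the lattices $\mathbb{Z}\langle J\rangle$, $M$, $M_J$ and $\mathbb{Z}^n\cap\mathbb{R}\langle J\rangle$ all have the same rank, so every index occurring below is finite; in particular $|G| = [\mathbb{Z}^n\cap\mathbb{R}\langle J\rangle:\mathbb{Z}\langle J\rangle] = p^a e$ with $(e,p)=1$, by the definition of $a$ and $e$ (and equation (1.2), which also gives $[M_J:\mathbb{Z}\langle J\rangle]=e$). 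Let $G'\subseteq G$ be the prime-to-$p$ part of $G$, i.e.\ the subgroup of elements whose order is prime to $p$; by the primary decomposition of a finite abelian group, $G'$ is the unique \emph{maximal} subgroup of $G$ of order prime to $p$, it has order $e$, and a subgroup $H\subseteq G$ satisfies $H\subseteq G'$ if and only if $|H|$ is prime to $p$.

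Next I would identify $M_J/\mathbb{Z}\langle J\rangle$ with $G'$. Indeed, for $u\in\mathbb{Z}^n\cap\mathbb{R}\langle J\rangle$ one has $u\in M_J$ precisely when $ku\in\mathbb{Z}\langle J\rangle$ for some $k$ with $(k,p)=1$, which is exactly the condition that the image of $u$ in $G$ have order prime to $p$; hence $M_J/\mathbb{Z}\langle J\rangle = G'$. (Note that $M_J$, as defined inside $\mathbb{Z}^n$, automatically lies in $\mathbb{Z}^n\cap\mathbb{R}\langle J\rangle$, since $ku\in\mathbb{Z}\langle J\rangle$ with $k\neq0$ forces $u\in\mathbb{R}\langle J\rangle$.) Because $\mathbb{Z}\langle J\rangle\subseteq M$, the subgroup $M/\mathbb{Z}\langle J\rangle$ of $G$ makes sense, and $M\subseteq M_J$ holds if and only if $M/\mathbb{Z}\langle J\rangle\subseteq G'$, which by the previous paragraph is equivalent to $|M/\mathbb{Z}\langle J\rangle|$ being prime to $p$.

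It then remains to see that $p^a\mid[\mathbb{Z}^n\cap\mathbb{R}\langle J\rangle:M]$ is the same condition. Here $[\mathbb{Z}^n\cap\mathbb{R}\langle J\rangle:M] = |G|/|M/\mathbb{Z}\langle J\rangle| = p^a e/|M/\mathbb{Z}\langle J\rangle|$. Writing $|M/\mathbb{Z}\langle J\rangle| = p^b m$ with $(m,p)=1$, the fact that this index is a positive integer forces $b\le a$ and $m\mid e$, so $[\mathbb{Z}^n\cap\mathbb{R}\langle J\rangle:M] = p^{a-b}(e/m)$ with $e/m$ prime to $p$; hence $p^a$ divides it exactly when $b=0$, i.e.\ exactly when $|M/\mathbb{Z}\langle J\rangle|$ is prime to $p$. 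Comparing with the conclusion of the previous paragraph yields the desired equivalence, and one direction ($M\subseteq M_J\Rightarrow p^a = [\mathbb{Z}^n\cap\mathbb{R}\langle J\rangle:M_J]\mid[\mathbb{Z}^n\cap\mathbb{R}\langle J\rangle:M]$) is in any case immediate from multiplicativity of indices.

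The argument is essentially bookkeeping once the right group is in hand; the only point requiring a moment's care is the structural fact invoked in the first paragraph, that a finite abelian group has a unique maximal subgroup of order prime to $p$ and that a subgroup lies in it iff its order is prime to $p$. This is standard (it follows at once from the decomposition of $G$ into its $\ell$-primary components), so in the write-up I would either cite it or record the one-line proof, and otherwise the lemma follows directly.
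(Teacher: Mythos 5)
Your proof is correct and is essentially the same argument as the paper's, just recast in the language of the quotient group $G = (\mathbb{Z}^n\cap\mathbb{R}\langle J\rangle)/\mathbb{Z}\langle J\rangle$: both hinge on multiplicativity of indices to show $[M:\mathbb{Z}\langle J\rangle]$ is prime to $p$, and then use Lagrange's theorem (equivalently, your observation that a subgroup of order prime to $p$ lands inside the prime-to-$p$ part $G'$) to conclude $M\subseteq M_J$. The paper states this in two lines without naming $G$ or $G'$, but the mathematical content is identical.
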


\begin{proof}
Suppose that $p^a\mid [{\mathbb Z}^n\cap{\mathbb R}\langle J\rangle:M]$.  Then
$[M:{\mathbb Z}\langle J\rangle] = e'$ with $(e',p) = 1$.  In particular,
$e'm\in {\mathbb Z}\langle J\rangle$ for all $m\in M$, so $M\subseteq M_J$.
The other direction of the assertion is clear.
\end{proof}

There are restrictions on the lattices with respect to which $f$ can be
nondegenerate. 
\begin{proposition}
Let $M$ be a lattice, ${\mathbb Z}\langle J\rangle\subseteq M\subseteq{\mathbb
  Z}^n\cap{\mathbb R}\langle J\rangle$. \\ 
{\bf (a)} If $f$ is nondegenerate relative to $(\Delta(f),M)$, then $M\subseteq
M_J$.  \\
{\bf (b)} Suppose $M\subseteq M_J$.  Then $f$ is nondegenerate relative to
$(\Delta(f),M)$ if and only if $f$ is nondegenerate relative to
$(\Delta(f),M_J)$.  
\end{proposition}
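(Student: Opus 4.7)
My plan is to dispatch part (b) first by a direct operator-comparison argument, and then to attack part (a) by contradiction, reducing to the setting provided by Proposition 4.3 and exhibiting an explicit face of $\Delta(f)$ witnessing failure of nondegeneracy. Part (a) is the harder direction; the main obstacle will be producing such a face.

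For part (b), the key observation is that $[M_J:M]$ is coprime to $p$: since $\mathbb{Z}\langle J\rangle\subseteq M\subseteq M_J$ and $[M_J:\mathbb{Z}\langle J\rangle]=e$ is coprime to $p$ by definition of $M_J$, the divisor $[M_J:M]$ of $e$ is also coprime to $p$. Dualizing $M\subseteq M_J$, the restriction map embeds $L_J\hookrightarrow L$ with $[L:L_J]=[M_J:M]$ coprime to $p$. By Smith Normal Form I can choose a basis $\{\ell_1,\ldots,\ell_k\}$ of $L$ and positive integers $b_1,\ldots,b_k$ all coprime to $p$ such that $\{b_i\ell_i\}$ is a basis of $L_J$. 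Since $E_{b_i\ell_i}=b_iE_{\ell_i}$ and each $b_i$ is a unit in $\mathbb{F}_q$, the common zero sets on $(\bar{\mathbb{F}}_q^\times)^n$ of $\{E_{\ell_i}(f_\sigma)\}_i$ and $\{E_{b_i\ell_i}(f_\sigma)\}_i$ coincide face by face, yielding the equivalence.

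For part (a), suppose for contradiction $f$ is nondegenerate relative to $(\Delta(f),M)$ while $M\not\subseteq M_J$. By Lemma 4.4 this forces $p\mid[M:\mathbb{Z}\langle J\rangle]$. Applying Proposition 4.3 together with (4.2), I reduce to the case $f=g$, $M=\mathbb{Z}^k$, $p\mid[\mathbb{Z}^k:\mathbb{Z}\langle C\rangle]$; it then suffices to show $g$ cannot be nondegenerate relative to $(\Delta(g),\mathbb{Z}^k)$. Since the reduction $\mathbb{Z}\langle C\rangle\to\mathbb{F}_p^k$ is not surjective, there is a primitive $\ell\in\mathrm{Hom}(\mathbb{Z}^k,\mathbb{Z})$ with $\ell|_C\equiv 0\pmod{p}$. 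Completing $\ell$ to a basis $\{\ell_1=\ell,\ell_2,\ldots,\ell_k\}$ and passing to the dual coordinates makes every exponent $c\in C$ satisfy $c_1\equiv 0\pmod{p}$, so $g(y)=h(y_1^p,y_2,\ldots,y_k)$ for some Laurent polynomial $h$.

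The hard step is now to exhibit a face $\sigma$ of $\Delta(g)$ not containing the origin and a point in $(\bar{\mathbb{F}}_q^\times)^k$ where every $E_{\ell_i}(g_\sigma)$ vanishes. Taking $\sigma_+$ to be the face on which $\ell$ attains its positive maximum $N$ (necessarily divisible by $p$), one has $g_{\sigma_+}(y)=y_1^N\tilde{h}(y_2,\ldots,y_k)$, with $E_{\ell_1}(g_{\sigma_+})=Ng_{\sigma_+}=0$ in characteristic $p$, and $E_{\ell_i}(g_{\sigma_+})=y_1^N y_i\,\partial\tilde{h}/\partial y_i$ for $i\ge 2$; a common zero on the torus then corresponds to a critical point of $\tilde{h}$ on $(\bar{\mathbb{F}}_q^\times)^{k-1}$. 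The main obstacle is verifying that such a critical point exists in general: when $\tilde{h}$ fails to have one at $\sigma_+$ itself, further descent to a sub-face of $\sigma_+$ (equivalently a face of $\Delta(\tilde{h})$ not containing the origin) is needed, combined with an induction on $k$ or on $v_p([\mathbb{Z}^k:\mathbb{Z}\langle C\rangle])$ that exploits the preservation of the $p$-divisibility structure under this descent.
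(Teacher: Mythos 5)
Your part (b) is correct and is essentially the paper's own argument: one chooses compatible bases of $L_J=\mathrm{Hom}_{\mathbb Z}(M_J,{\mathbb Z})$ and $L=\mathrm{Hom}_{\mathbb Z}(M,{\mathbb Z})$ related by integer scalars prime to $p$ (the paper gets these from the elementary divisors of $M\subseteq M_J$ together with Lemma~4.4), and since $E_{b\ell}=bE_{\ell}$ with $b$ a unit in ${\mathbb F}_q$, the common zero loci agree face by face.

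Part (a) has a genuine gap exactly at the step you flag as the main obstacle, and the face you propose does not work. After the reduction to $g(y)=h(y_1^p,y_2,\dots,y_k)$ (which matches the paper), you take $\sigma_+$ to be the face where $\ell=\ell_1$ is maximal. But $\sigma_+$ can be a vertex $c$ with some $c_i\not\equiv 0\pmod p$ for $i\geq 2$, in which case $E_{\ell_i}(g_{\sigma_+})=c_ia_cy^c$ is a nowhere-vanishing monomial on the torus, $\sigma_+$ witnesses nothing, and there are no sub-faces to descend to. Concretely, for $g=y_1^py_2+y_2^{-1}$ (so $[{\mathbb Z}^2:{\mathbb Z}\langle C\rangle]=p$) one has $\sigma_+=\{(p,1)\}$, which is a nondegenerate face, while the degenerate face is the \emph{edge} joining $(p,1)$ to $(0,-1)$: $E_{\ell_2}(g)=y_1^py_2-y_2^{-1}$ vanishes on the torus and $E_{\ell_1}(g)=py_1^py_2=0$. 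That witnessing face strictly \emph{contains} $\sigma_+$, so no descent to sub-faces of $\sigma_+$ (or to faces of $\Delta(\tilde h)$) can find it; also $\ell$ need not even attain a positive maximum on $\Delta(g)$. The paper's proof replaces your local search by a global dichotomy plus a counting theorem: either some face of codimension $>1$ not containing the origin is already a witness, or else, for each facet $\sigma$ not containing the origin, one translates $\sigma$ to a hyperplane through the origin, rewrites the $n-1$ nontrivial equations $\{x_i\partial g_\sigma/\partial x_i\}_{i=1}^{n-1}$ in $n-1$ torus coordinates, and applies Kouchnirenko's theorem ([KO, 1.18, Th\'eor\`eme ${\rm III}'$(ii)]); the codimension-$>1$ hypothesis is precisely the face condition needed there, and the number of common zeros equals $(n-1)!$ times the $(n-1)$-volume of $\sigma$, which is positive. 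It is that positivity, not an induction or descent, that produces the common zero, and this ingredient is missing from your outline; as written, part (a) is not proved.
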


\begin{proof}
We may assume without loss of generality that ${\mathbb Z}\langle J\rangle$ is
a subgroup of ${\mathbb Z}^n$ of rank $n$.  For if ${\rm rank}({\mathbb
  Z}\langle J\rangle) = k<n$, then by (4.1) we may take $f$ to be a Laurent
polynomial in $x_1,\dots,x_k$, in which case ${\mathbb Z}\langle J\rangle$ is
a subgroup of ${\mathbb Z}^n\cap{\mathbb R}\langle J\rangle (={\mathbb Z}^k)$
of rank~$k$. 

We suppose $M$ is not contained in $M_J$ and prove that $f$ must be
degenerate relative to $(\Delta(f),M)$.  By (4.2) and Lemma 4.4, we
have $p^a\nmid [{\mathbb Z}\langle C\rangle:{\mathbb Z}\langle
  J\rangle]$.  But $p^a\mid[{\mathbb Z}^n:{\mathbb Z}\langle
  J\rangle]$, so $p\mid[{\mathbb Z}^n:{\mathbb Z}\langle C\rangle]$.
Arguing as in the proof of (4.1) then shows that there exists a
Laurent polynomial  
\[ h = \sum_{e\in E}c_ex^e\in{\mathbb F}_q[x_1^{\pm1},\dots,x_n^{\pm
  1}] \]
such that
\begin{equation}
g(x_1,\dots,x_n) = h(x_1,\dots,x_{n-1},x_n^p).
\end{equation}
To show $f$ is degenerate relative to $(\Delta(f),M)$, it suffices by
Proposition~4.3 to show that any Laurent polynomial $g$ of the form (4.6) is
degenerate relative to $(\Delta(g),{\mathbb Z}^n)$.

We must find a face $\sigma$ of $\Delta(g)$ not containing the origin such
that $\{x_i\partial g_\sigma/\partial x_i\}_{i=1}^n$ have a common zero in
$(\bar{\mathbb F}_q^\times)^n$.  Note that (4.6) implies that all $x_n\partial
g_\sigma/\partial x_n$ vanish identically.  We assume that for every face
$\sigma$ of $\Delta(g)$ not containing the origin and having codimension $>1$,
the Laurent polynomials $\{x_i\partial g_\sigma/\partial x_i\}_{i=1}^{n-1}$
have no common zero in $(\bar{\mathbb F}_q^\times)^n$.  We then prove that for
every face $\sigma$ of $\Delta(g)$ not containing the origin and having
codimension $1$, the Laurent polynomials $\{x_i\partial g_\sigma/\partial
x_i\}_{i=1}^{n-1}$ do have a common zero in $(\bar{\mathbb F}_q^\times)^n$.

Fix such a face $\sigma$ of codimension $1$.  By our hypothesis, for every
proper face $\tau$ of $\sigma$, 
\begin{equation} 
\text{ $\{x_i\partial g_{\tau}/\partial x_i\}_{i=1}^{n-1}$ have no common zero
  in $(\bar{\mathbb F}_q^\times)^n$. }
\end{equation}
We make a change of variable in order to apply a theorem of
Kouchnirenko\cite{KO}.  First of all, the face $\sigma$ lies in a unique
hyperplane $H$ in ${\mathbb R}^n$.  Choose $\alpha\in{\mathbb Z}^n\cap H$ and
set 
\[ \phi_i = x^{-\alpha}x_i\frac{\partial
  g_\sigma}{\partial x_i} \]
for $i=1,\dots,n-1$.  Put $K=-\alpha + (J\cap\tau)$.  Then each $\phi_i$ can be
written in the form 
\begin{equation}
\phi_i = \sum_{k\in K} a^{(i)}_kx^k. 
\end{equation}
Note that $K$ is contained in the hyperplane $-\alpha + H$, which contains the
origin.  Choose a basis ${\bf b}^{(1)},\dots,{\bf b}^{(n-1)}$ for the
rank-$(n-1)$ lattice ${\mathbb Z}^n\cap(-\alpha+H)$ and set $y_i = x^{{\bf
    b}^{(i)}}$.  Let $B$ be the $n\times(n-1)$-matrix whose columns are the
${\bf b}^{(j)}$.  Multiplication of vectors by $B$ is an isomorphism from
${\mathbb R}^{n-1}$ onto the subspace $-\alpha+H$ of ${\mathbb R}^n$ which
induces an isomorphism of the lattice ${\mathbb Z}^{n-1}$ onto the lattice
${\mathbb Z}^n\cap(-\alpha+H)$.  From (4.8) we have
\begin{equation}
\phi_i = \sum_{v\in V}a^{(i)}_{Bv}x^{Bv} = \sum_{v\in V}a^{(i)}_{Bv}y^v, 
\end{equation}
where $V = B^{-1}(K)\subseteq{\mathbb Z}^{n-1}$.  

The convex hull of $K$ is $-\alpha+\sigma$, so the convex hull of $V$ is
$B^{-1}(-\alpha+\sigma)$.  For any face $\tau$ of $\sigma$, there are
corresponding faces $-\alpha+\tau$ of the convex hull of $K$ and
$B^{-1}(-\alpha + \tau)$ of the convex hull of $V$.  Set
\[ (\phi_i)_{B^{-1}(-\alpha+\tau)} = \sum_{v\in V\cap B^{-1}(-\alpha+\tau)}
a^{(i)}_{Bv}y^v. \]
By (4.7), the $\{(\phi_i)_{B^{-1}(-\alpha+\tau)}\}_{i=1}^{n-1}$ have no common
zero in $(\bar{\mathbb F}_q^\times)^{n-1}$.  It then follows from
\cite[1.18~Th\'eor\`eme~${\rm III}'$(ii)]{KO} that the number of common zeros
$(y_1,\dots,y_{n-1})$ of $\{\phi_i\}_{i=1}^{n-1}$ in $(\bar{\mathbb
  F}_q^\times)^{n-1}$ equals $(n-1)!$ times the $(n-1)$-volume of the convex
hull of~$V$.  In particular, the set of common zeros in $(\bar{\mathbb
  F}_q^\times)^{n-1}$ is nonempty.

Choose an $(n-1)\times n$ integral matrix $B'=(b'_{ij})$ such that $B'B =
I_{n-1}$.  For any common zero $(y_1,\dots,y_{n-1})\in(\bar{\mathbb
  F}_q^\times)^{n-1}$, set 
\[ x_j = y_1^{b'_{1j}}\cdots y_{n-1}^{b'_{n-1,j}} \]
for $j=1,\dots,n$.  This gives $x\in(\bar{\mathbb F}_q^\times)^n$ satisfying
$y_i = x^{{\bf b}^{(i)}}$ for $i=1,\dots,n-1$, so this $x$ is a common zero of
$\{x_i\partial g_\sigma/\partial x_i\}_{i=1}^{n-1}$.  Thus $g$ is degenerate
relative to $(\Delta(g),{\mathbb Z}^n)$, which establishes part (a) of the
proposition.  

Now suppose that $M\subseteq M_J$.  Choose a basis $\{{\bf e}^{(i)}\}_{i=1}^n$
for $M_J$ and integers $d_1,\dots,d_n$ such that $\{d_i{\bf
  e}^{(i)}\}_{i=1}^n$ is a basis for $M$.  By Lemma~4.4, $p\nmid d_1\cdots
d_n$.  Let $\{\ell_i\}_{i=1}^n$ be the basis for ${\rm Hom}_{\mathbb
  Z}(M_J,{\mathbb Z})$ defined by
\[ \ell_i({\bf e}^{(j)}) = \delta_{ij}\quad(\text{Kronecker's delta}). \]
Then $\{d_i^{-1}\ell_i\}_{i=1}^n$ is a basis for ${\rm Hom}_{\mathbb
  Z}(M,{\mathbb Z})$.  And since $(d_i,p) = 1$ for all $i$,
the $\{E_{\ell_i}(f_\sigma)\}_{i=1}^n$ have no common zero in $(\bar{\mathbb
  F}_q^\times)^n$ if and only if the same is true of the
  $\{E_{d_i^{-1}\ell_i}(f_\sigma)\}_{i=1}^n$.  This establishes part (b) of
the proposition. 
\end{proof}

By Proposition 4.5(a), we must have $M\subseteq M_J$ if we hope to have $f$
nondegenerate relative to $(\Delta(f),M)$.  On the other hand, we must have
$M_J\subseteq M$ in order for the trace formula (Theorem~2.5) to hold for $M$.
Thus the only practical choice for $M$ is to take $M=M_J$.  Recall from
Section~2 that if $g(t)$ is a power series with constant term 1, then
$g(t)^\delta = g(t)/g(qt)$.

\begin{theorem}
Suppose that ${\mathbb Z}\langle J\rangle$ has rank $k$ and that $f$ is
nondegenerate relative to $(\Delta(f),M_J)$.  Then $L({\mathbb
  T}^n,f;t)^{(-1)^{n-1}} = P(t)^{\delta^{n-k}}$, where $P(t)$ is a polynomial 
of degree $k!\,V_{M_J}(f)$ and $V_{M_J}(f)$ denotes the volume of
$\Delta(f)$ relative to Lebesgue measure on ${\mathbb R}\langle
J\rangle$ normalized so that a fundamental domain for $M_J$ has volume~$1$.
\end{theorem}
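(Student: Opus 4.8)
The plan is to remove the obstruction by $p$-power reduction and then quote the known nondegenerate case on the torus. Apply the construction preceding Proposition~4.3 with $M=M_J$: after a coordinate change on ${\mathbb T}^n$ we may assume $f(x_1,\dots,x_n)=g(x_1^{d_1},\dots,x_k^{d_k})$ for some $g=\sum_{c\in C}b_cx^c\in{\mathbb F}_q[x_1^{\pm1},\dots,x_k^{\pm1}]$ as in (4.1). By the Remark following (4.2) each $d_i$ is a power of $p$, and by (4.2) the lattice ${\mathbb Z}\langle C\rangle$ has rank $k$, so $\Delta(g)$ is a $k$-dimensional polytope in ${\mathbb R}^k$ containing the origin. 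Proposition~4.3 (applied with $M=M_J$) shows that $f$ being nondegenerate relative to $(\Delta(f),M_J)$ is equivalent to $g$ being nondegenerate relative to $(\Delta(g),{\mathbb Z}^k)$, which, as noted after the definition, is just nondegeneracy of $g$ relative to $\Delta(g)$ in the sense of \cite{AS1,AS3}.

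Next I would compare $L$-functions and volumes. Because each $d_i$ is a power of $p$, the map $x_i\mapsto x_i^{d_i}$ is a bijection of ${\mathbb F}_{q^m}^\times$, so $S_m({\mathbb T}^k,g)=S_m({\mathbb T}^k,f(x_1,\dots,x_k))$; since the remaining variables $x_{k+1},\dots,x_n$ range freely over ${\mathbb F}_{q^m}^\times$, this gives $S_m({\mathbb T}^n,f)=(q^m-1)^{n-k}S_m({\mathbb T}^k,g)$. Expanding $(q^m-1)^{n-k}$ by the binomial theorem and comparing with the definition of $\delta$ (via the identity $h(t)^{\delta^{m}}=\prod_{j=0}^{m}h(q^jt)^{(-1)^j\binom{m}{j}}$) yields
\[ L({\mathbb T}^n,f;t)=\bigl(L({\mathbb T}^k,g;t)^{(-1)^{n-k}}\bigr)^{\delta^{n-k}}, \]
hence $L({\mathbb T}^n,f;t)^{(-1)^{n-1}}=\bigl(L({\mathbb T}^k,g;t)^{(-1)^{k-1}}\bigr)^{\delta^{n-k}}$, using $(-1)^{n-k}(-1)^{n-1}=(-1)^{k-1}$. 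For the volume: the linear automorphism of ${\mathbb R}^k$ sending ${\bf e}_i\mapsto d_i^{-1}{\bf e}_i$ carries the basis $\{d_i{\bf e}_i\}$ of $M_J$ onto the standard basis of ${\mathbb Z}^k$ and carries $\Delta(f)$ onto $\Delta(g)$, so the $M_J$-normalized volume $V_{M_J}(f)$ of $\Delta(f)$ equals the ordinary Lebesgue volume of $\Delta(g)$.

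It remains to invoke the nondegenerate case. Since $g$ is nondegenerate relative to its $k$-dimensional Newton polyhedron $\Delta(g)$, the main theorem of \cite{AS1} (see also \cite{AS3}) asserts that $P(t):=L({\mathbb T}^k,g;t)^{(-1)^{k-1}}$ is a polynomial of degree $k!$ times the Lebesgue volume of $\Delta(g)$, i.e.\ of degree $k!\,V_{M_J}(f)$. Substituting into the displayed identity gives $L({\mathbb T}^n,f;t)^{(-1)^{n-1}}=P(t)^{\delta^{n-k}}$, as claimed.

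I expect no genuinely hard step along this route: the analytic and cohomological heart of the matter — that nondegeneracy forces the $L$-function to be an honest polynomial of exactly the predicted degree — is imported wholesale from \cite{AS1}. The points needing care are (i) that Proposition~4.3 is applied with the specific lattice $M=M_J$, which is precisely what makes the $d_i$ powers of $p$ and hence makes the $p$-power reduction transparent to the exponential sum, and (ii) the sign and $\delta$-transform bookkeeping in passing from $g$ on ${\mathbb T}^k$ back to $f$ on ${\mathbb T}^n$, together with the volume-normalization check $V_{M_J}(f)=\operatorname{vol}(\Delta(g))$. An alternative, more self-contained argument would run Dwork's method directly with the operator $\tilde\alpha$ on $L_0(p/(p-1))$ and the counting function $W_0$ of (3.1), reproving the cohomological vanishing in the lattice setting; the real obstacle there would be redeveloping the Koszul-complex computation of \cite{AS1}, which the reduction above lets us sidestep.
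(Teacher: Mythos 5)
Your proof is correct, but it takes a genuinely different route from the paper's. The paper reproves the result from scratch in the lattice-modified setting: it constructs a Koszul complex $\Omega^\bullet$ on the space $B_0'$ using the operators $\hat{D}_{\ell_i}$, shows via Kouchnirenko's methods that the associated graded Koszul complex is a regular sequence with top cohomology of dimension $k!\,V_{M_J}(f)$, invokes Monsky's cohomological lifting theorem, and then reads off $P(t)$ as the Fredholm determinant of $\tilde{\alpha}$ on $H^n(\Omega^\bullet)$. You instead perform the $p$-power reduction of (4.1) with $M=M_J$, use Proposition~4.3 to transfer nondegeneracy to $g$ over $\Delta(g)\subseteq{\mathbb R}^k$, observe that each $d_i$ being a power of $p$ makes the change of variables invisible to the exponential sum (so $S_m({\mathbb T}^n,f)=(q^m-1)^{n-k}S_m({\mathbb T}^k,g)$), carry out the $\delta$-transform and volume-normalization bookkeeping, and import the full-rank nondegenerate theorem from the earlier papers. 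Both routes are sound, and the paper itself acknowledges the reduction route in passing (in the paragraph preceding Theorem~4.14 it offers exactly this alternative for that theorem). What the reduction buys you is economy for Theorem~4.10; what the paper's direct cohomological construction buys is the explicit complex $\Omega^\bullet$ and the isomorphisms (4.11)--(4.13), which are then reused in the proof of Theorem~4.20, where the affine ${\mathbb A}^{n-k}$ factor prevents a single clean $p$-power reduction from handling all toric strata at once. One small citation slip: the exact polynomiality and degree statement you quote for $g$ is from \cite{AS3}, not \cite{AS1} (whose main result is only the degree bound); your parenthetical ``see also \cite{AS3}'' covers this, but \cite{AS3} should be the primary citation. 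You should also note explicitly, as the paper does via Proposition~4.5(a), that the transferred nondegeneracy of $g$ forces $M_C={\mathbb Z}^k$, i.e.\ $p\nmid[{\mathbb Z}^k:{\mathbb Z}\langle C\rangle]$, which is what ensures the \cite{AS3} theorem genuinely applies to $g$ without running into the very pathology this paper was written to fix.
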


\begin{proof}
One repeats the arguments of \cite{AS3} with the modifications
introduced for Theorem~2.5: replace $L(b)$ and $B'$ by $L_0(b)$ and $B_0'$ and
use $\tilde{\alpha}$ in place of $\alpha$.  We recall some of these details as
they are needed in the proof of Theorem~4.20 below.   

Let 
\[ \Omega^\bullet: 0\to\Omega^0\to\dots\to\Omega^n\to 0 \]
be the cohomological Koszul complex on $B_0'$ defined by the differential
operators $\{\hat{D}_i\}_{i=1}^n$ constructed in \cite[Section 2]{AS3}.  The
endomorphism $\tilde{\alpha}$ of $B_0'$ constructed in Section~2 can be
extended to an endomorphism $\tilde{\alpha}_\bullet$ of the complex
$\Omega^\bullet$ by noting that $\Omega^i = (B_0')^{\binom{n}{i}}$ and then
defining $\tilde{\alpha}_i:\Omega^i\to\Omega^i$ to be
\begin{equation}
(q^{n-i}\tilde{\alpha})^{\binom{n}{i}}:(B_0')^{\binom{n}{i}}\to
(B_0')^{\binom{n}{i}}. 
\end{equation}
Theorem~2.5 is equivalent to the assertion that
\[ L({\mathbb T}^n,f;t) = \prod_{i=0}^n
\det(I-t\tilde{\alpha}_i\mid\Omega^i)^{(-1)^{i+1}}, \]
which implies that 
\begin{equation}
L({\mathbb T}^n,f;t) = \prod_{i=0}^n
\det(I-t\tilde{\alpha}_i\mid H^i(\Omega^\bullet))^{(-1)^{i+1}}. 
\end{equation}

Put $R = {\mathbb F}_q[x^u\mid u\in M_0(f)]$.  The ring $R$ has an increasing
filtration defined by the weight function $w$ of Section 2: $F_{i/N}R$ is the
subspace spanned by $\{x^u\mid w(u)\leq i/N\}$.  Let $\bar{R} =
\bigoplus_{i=0}^\infty \bar{R}_{i/N}$ be the associated graded ring, i.~e.,
$\bar{R}_{i/N} = F_{i/N}R/F_{(i-1)/N}$.  Now suppose that $f$ is nondegenerate
relative to $(\Delta(f),M_J)$, let $\{\ell_i\}_{i=1}^k$ be a basis for
$L={\rm Hom}_{\mathbb Z}(M_J,{\mathbb Z})$, and let
$\overline{E_{\ell_i}(f)}\in\bar{R}_1$ be the image in the associated graded
ring of $E_{\ell_i}(f)\in F_1R$.  The nondegeneracy hypothesis implies by the
arguments of \cite{KO} that $\{\overline{E_{\ell_i}(f)}\}_{i=1}^k$ is a
regular sequence in~$\bar{R}$, i.~e.,  the (cohomological) Koszul complex on
$\bar{R}$ defined by $\{\overline{E_{\ell_i}(f)}\}_{i=1}^k$ has vanishing
cohomology except in top dimension.  Furthermore, also by the methods of
\cite{KO}, one can show that the single nonvanishing cohomology group has
dimension $k!\,V_{M_J}(f)$.  

Since $M_J\subseteq{\mathbb Z}^n$, we may express the elements of $L$ as
linear forms in $n$ variables.  Write
\[ \ell_i(u_1,\dots,u_n) = \sum_{j=1}^n a_{ij}u_j,\quad a_{ij}\in
  p^{-a}{\mathbb Z}. \]
Put $\hat{D}_{\ell_i} = \sum_{j=1}^n a_{ij}\hat{D}_j$ and let
$\Omega^\bullet_\ell$ be the cohomological Koszul complex on $B_0'$ defined by
$\{\hat{D}_{\ell_i}\}_{i=1}^k$.  The Frobenius action
$\tilde{\alpha}_i:\Omega^i_\ell\to \Omega^i_\ell$ is defined to be
\[ (q^{k-i}\tilde{\alpha})^{\binom{k}{i}}:(B'_0)^{\binom{k}{i}}\to
  (B'_0)^{\binom{k}{i}}. \]
The ``reduction mod $p$'' (see \cite[Lemma~2.10]{AS3}) of
$\Omega^\bullet_\ell$ is the Koszul complex on $\bar{R}$ defined by
$\{\overline{E_{\ell_i}(f)}\}_{i=1}^k$.  The cohomological lifting 
theorem of Monsky (see \cite[Theorem~8.5]{M} or \cite[Theorem~A.1]{AS3}) then
implies that the cohomology of $\Omega^\bullet_\ell$ vanishes except in top
dimension and that $H^k(\Omega^\bullet_\ell)$ has dimension
$k!\,V_{M_J}(f)$.  But since $\{\hat{D}_{\ell_i}\}_{i=1}^k$ are linear
combinations of $\{\hat{D}_i\}_{i=1}^n$ and vice versa, it follows
that (as Frobenius modules) 
\[ H^i(\Omega^\bullet)\cong (H^k(\Omega^\bullet_\ell))^{\binom{n-k}{n-i}}, \]
where it is understood that the right-hand side vanishes if $i<k$.  In
particular we have $H^n(\Omega^\bullet)\cong H^k(\Omega^\bullet_\ell)$, hence
\[ \det(I-t\tilde{\alpha}_i\mid H^i(\Omega^\bullet)) =
\det(I-q^{n-i}t\tilde{\alpha}_n\mid H^n(\Omega^\bullet))^{\binom{n-k}{n-i}}. \]
From equation (4.12) we then get
\begin{equation}
L({\mathbb T}^n,f;t) = \prod_{i=k}^n \det(I-q^{n-i}t\tilde{\alpha}_n\mid
H^n(\Omega^\bullet))^{(-1)^{i+1}\binom{n-k}{n-i}}.
\end{equation}
If we put
\[ P(t) = \det(I-t\tilde{\alpha}_n\mid H^n(\Omega^\bullet))\;( =
\det(I-t\tilde{\alpha}_k\mid H^k(\Omega^\bullet_\ell))), \]
then $P(t)$ is a polynomial of degree $k!\,V_{M_J}(f)$ and (4.13)
implies that 
\[ L({\mathbb T}^n,f;t)^{(-1)^{n-1}} = P(t)^{\delta^{n-k}}. \]
This completes the proof of Theorem~4.10.  
\end{proof}

Assume the hypotheses of Theorem 4.10.  
The quotient ring
\[ \bar{R}/(\overline{E_{\ell_1}(f)},\dots,\overline{E_{\ell_k}(f)}) \]
is a graded ring of dimension $k!\,V_{M_J}(f)$ over ${\mathbb F}_q$.  Put 
\[ a_i = \dim_{{\mathbb F}_q}
(\bar{R}/(\overline{E_{\ell_1}(f)},\dots,\overline{E_{\ell_k}(f)}))_{i/N}. \] 
One can show that $a_i = 0$ for $i>kN$.  By either repeating the argument of
\cite{AS3} or replacing the polynomial $f$ by the polynomial
$g(x_1^{e_1},\dots,x_n^{e_n})$ constructed in the Introduction and applying
\cite[Theorem~3.10]{AS3}, one obtains the following generalization of part of
\cite[Theorem~3.10]{AS3}. 

\begin{theorem}
Under the hypotheses of Theorem $4.10$, the Newton polygon of the polynomial
$P(t)$ relative to the valuation ${\rm ord}_q$ lies on or above the Newton
polygon relative to ${\rm ord}_q$ of the polynomial $\prod_{i=0}^{kN}
(1-q^{i/N}t)^{a_i}$. 
\end{theorem}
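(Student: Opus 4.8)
The plan is to deduce the result from \cite[Theorem~3.10]{AS3} by passing to the $p$-power reduction of $f$; the same estimate, with the reduction left implicit, gives the ``repeat the argument of \cite{AS3}'' route. Take $M = M_J$. Then by (4.2), $[{\mathbb Z}\langle C\rangle:{\mathbb Z}\langle J\rangle] = d_1\cdots d_k = p^a$, so each $d_i$ in the factorization (4.1), $f(x_1,\dots,x_n) = g(x_1^{d_1},\dots,x_k^{d_k})$, is a power of $p$. By the Remark following Proposition~4.3 the exponential sums of $f$ and $g$ coincide, and by Proposition~4.3 the nondegeneracy of $f$ relative to $(\Delta(f),M_J)$ is equivalent to the nondegeneracy of $g$ relative to $(\Delta(g),{\mathbb Z}^k)$. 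So we are reduced to proving the theorem for a Laurent polynomial $g$ in $k$ variables that is nondegenerate relative to $(\Delta(g),{\mathbb Z}^k)$.

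The next step is to check that every quantity in the statement is preserved under this reduction. Using the dictionary from the proof of Proposition~4.3 --- $x^u\leftrightarrow y^v$ with $u_i = d_iv_i$, faces $\sigma\leftrightarrow\sigma'$, weights $w_{\Delta(f)}(u)=w_{\Delta(g)}(v)$, and $E_{\ell_i}(f_\sigma)\leftrightarrow E_{\ell'_i}(g_{\sigma'})$ with $\ell'_i=x_i\partial/\partial x_i$ --- one sees that $M_0(f)=M_J\cap C(f)$ is carried bijectively onto ${\mathbb Z}^k\cap C(g)$ compatibly with weights, that $V_{M_J}(f)=V(g)$, and that the graded Jacobian-type ring $\bar R/(\overline{E_{\ell_1}(f)},\dots,\overline{E_{\ell_k}(f)})$ is isomorphic as a graded ring to its analogue for $g$; in particular the dimensions $a_i$ of the weight-$i/N$ pieces agree, and $a_i=0$ for $i>kN$ by the standard bound on the top weight of Kouchnirenko's Jacobian ring. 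Tracing the construction of $P(t)$ in the proof of Theorem~4.10 through the same dictionary (the series $F$, $F_0$, the space $B_0'$, and the operators $\hat D_{\ell_i}$ all correspond), one finds that $P(t)$ for $f$ equals the numerator polynomial for $g$ on ${\mathbb T}^k$. Applying \cite[Theorem~3.10]{AS3} to $g$ then gives the asserted inequality of Newton polygons relative to ${\rm ord}_q$.

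Alternatively one repeats the proof of \cite[Theorem~3.10]{AS3} with the modifications of Theorem~2.5: replace $\alpha$ on $L(p/(p-1))$ and $B'$ by $\tilde\alpha$ on $L_0(p/(p-1))$ and $B_0'$, and use $W_0$ of (3.1) in place of $W$. The $p$-adic estimates for the matrix of the Frobenius operator in terms of $w$ (the analogue of the basic weight estimate of \cite{AS3}) hold verbatim with exponents restricted to $M_0(f)$. Using that nondegeneracy makes $\{\overline{E_{\ell_i}(f)}\}_{i=1}^k$ a regular sequence in $\bar R$ (as in the proof of Theorem~4.10), choose a basis of $H^k(\Omega^\bullet_\ell)$ consisting of monomials $x^{u_j}$ with exactly $a_i$ of the $u_j$ of weight $i/N$, and lift them to $\Omega^k_\ell = B_0'$. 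With respect to such a weight-graded basis one shows that the matrix of the induced Frobenius $\tilde\alpha_k$ on $H^k(\Omega^\bullet_\ell)$ has every entry in the column indexed by $u_j$ of ${\rm ord}_q$ at least $w(u_j)$. Granting this, expanding $P(t)=\det(I-t\tilde\alpha_k\mid H^k(\Omega^\bullet_\ell))$ into principal minors shows that the coefficient of $t^m$ has ${\rm ord}_q$ at least the sum of the $m$ smallest weights $w(u_j)$, which is precisely the Newton polygon of $\prod_{i=0}^{kN}(1-q^{i/N}t)^{a_i}$.

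The main obstacle is the Frobenius estimate on cohomology invoked in the last paragraph: promoting the chain-level bounds --- which are routine, coming from (2.1) and the subadditivity $w(u+v)\le w(u)+w(v)$ of the weight function --- to an estimate on $H^k(\Omega^\bullet_\ell)$ against a basis of weight-controlled monomials. This is exactly where nondegeneracy is essential: by Monsky's lifting theorem (\cite[Theorem~8.5]{M}, \cite[Theorem~A.1]{AS3}), the reduction mod $p$ of $\Omega^\bullet_\ell$ is the Koszul complex on $\bar R$ defined by the regular sequence $\{\overline{E_{\ell_i}(f)}\}$, whose cohomology is concentrated in top degree; this both pins down the dimensions $a_i$ and lets one reduce any $\tilde\alpha$-image modulo the images of the $\hat D_{\ell_i}$ to a combination of the chosen monomials without increasing weights, so the chain-level bound survives. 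In the reduction approach this difficulty is simply inherited from \cite[Theorem~3.10]{AS3}.
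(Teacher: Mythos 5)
Your proposal is correct and follows exactly the two routes that the paper itself sketches in the sentence preceding the theorem (``either repeating the argument of [AS3] or replacing the polynomial $f$ by the polynomial $g(x_1^{e_1},\dots,x_n^{e_n})$ constructed in the Introduction and applying [Theorem~3.10, AS3]''); you have simply filled in the details the paper leaves implicit. In the reduction route you correctly use $(4.1)$ with $M=M_J$ and the Remark following $(4.2)$ to pass to $g$, and the dictionary you write down (monomial bijection via $u=Dv$, matching of weights, faces, $E_{\ell_i}(f)\leftrightarrow E_{\ell'_i}(g)$, hence matching graded Jacobian rings and $a_i$'s) is exactly what is needed to see that the polynomial $P(t)$ of Theorem~4.10 for $f$ equals the one produced by [Theorem~3.10, AS3] for $g$, after cancelling the $\delta^{n-k}$ on both sides of the resulting identity of $L$-functions; your alternative paragraph is likewise the modification route the paper alludes to, with the right emphasis on where nondegeneracy and Monsky's lifting theorem are used.
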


{\it Remark}.  We recall the combinatorial description of the $a_i$.
Take $M=M_J$ in (3.1) and form the generating series
\[ H(t) = \sum_{i=0}^\infty W_0(i) t^{i/N}. \]
Then
\[ H(t) = \frac{\sum_{i=0}^{kN} a_i t^{i/N}}{(1-t)^k}. \]

We generalize Theorem~4.10 to the affine case.  (The corresponding
generalization of Theorem~4.14 is somewhat more involved so we
postpone that to a future article.)  Let 
\[ f = \sum_{j\in J} a_jx^j\in{\mathbb F}_q[x_1^{\pm 1},\dots,x_k^{\pm
  1},x_{k+1},\dots,x_n]. \]
For each subset $A\subseteq\{k+1,\dots,n\}$, let
$f_A$ be the polynomial obtained from $f$ by setting $x_i = 0$ for all $i\in
A$.  Then
\begin{equation}
f_A = \sum_{j\in J_A} a_jx^j\in{\mathbb F}[x_1^{\pm 1},\dots,x_k^{\pm
  1},\{x_i\}_{i\not\in A}],  
\end{equation}
where $J_A = \{j=(j_1,\dots,j_n)\in J\mid \text{$j_i = 0$ for $i\in A$}\}$.
We call $f$ {\it convenient\/} if for each such $A$ one has
\[ \dim\Delta(f_A) = \dim\Delta(f) - |A|. \]
Suppose $f$ is convenient and nondegenerate relative
to $(\Delta(f),M_J)$.  The hypothesis that $f$ be convenient guarantees that
$f_A$ is also convenient, and the hypothesis that $f$ be nondegenerate
relative to $(\Delta(f),M_J)$ implies that $f_A$ is nondegenerate relative to
$(\Delta(f_A),M_J\cap{\mathbb R}\langle J_A\rangle)$.  By Proposition~4.5(a),
we must then have  $M_J\cap{\mathbb R}\langle J_A\rangle\subseteq M_{J_A}$.
The reverse inclusion is clear, so
\begin{equation}
M_{J_A} = M_J\cap{\mathbb R}\langle J_A\rangle
\end{equation}
and we conclude that $f_A$ is nondegenerate relative to
$(\Delta(f_A),M_{J_A})$.  Applying Theorem~4.10, we get that 
\begin{equation}
L({\mathbb T}^{n-|A|},f_A;t)^{(-1)^{n-|A|-1}} = P_A(t)^{\delta^{n -
    \dim\Delta(f)}}
\end{equation}
where $P_A(t)$ is a polynomial of degree 
\begin{equation}
\deg P_A(t) = (\dim\Delta(f_A))!\,V_{M_{J_A}}(f_A).
\end{equation}

The standard toric decomposition of affine space  gives
\[ S_m({\mathbb T}^k\times{\mathbb A}^{n-k},f) =
\sum_{A\subseteq\{k+1,\dots,n\}} S_m({\mathbb T}^{n-|A|},f_A), \]
hence
\begin{equation}
L({\mathbb T}^k\times{\mathbb A}^{n-k},f;t)^{(-1)^{n-1}} =
\prod_{A\subseteq\{k+1,\dots,n\}} 
(L({\mathbb T}^{n-|A|},f_A;t)^{(-1)^{n-|A|-1}})^{(-1)^{|A|}}.
\end{equation}
Put
\[ \nu(f) = \sum_{A\subseteq\{k+1,\dots,n\}} (-1)^{|A|}(\dim\Delta(f_A))!
\,V_{M_{J_A}}(f_A). \] 
\begin{theorem}
If $f\in{\mathbb F}_q[x_1^{\pm 1},\dots,x_k^{\pm 1},x_{k+1},\dots,x_n]$ is
nondegenerate relative to $(\Delta(f),M_J)$ and convenient, then 
\begin{equation}
L({\mathbb T}^k\times{\mathbb A}^{n-k},f;t)^{(-1)^{n-1}} =
Q(t)^{\delta^{n-\dim\Delta(f)}},
\end{equation}
where $Q(t)$ is a polynomial of degree $\nu(f)$.
\end{theorem}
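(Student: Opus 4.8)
The plan is to combine the identities (4.17) and (4.19) already established above, which reduces the statement to a single polynomiality assertion, and then to prove that assertion by repeating the cohomological arguments of \cite{AS4} with the lattice modifications used for Theorem~4.10.

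First I would substitute (4.17) into (4.19). Since the $\delta$-operator is multiplicative (i.e.\ $(gh)^{\delta}=g^{\delta}h^{\delta}$ on power series with constant term~$1$), this gives at once
\begin{align*}
L({\mathbb T}^k\times{\mathbb A}^{n-k},f;t)^{(-1)^{n-1}}
&=\prod_{A\subseteq\{k+1,\dots,n\}}\bigl(P_A(t)^{\delta^{n-\dim\Delta(f)}}\bigr)^{(-1)^{|A|}}\\
&=Q(t)^{\delta^{n-\dim\Delta(f)}},
\end{align*}
where $Q(t):=\prod_{A\subseteq\{k+1,\dots,n\}}P_A(t)^{(-1)^{|A|}}$ is, a priori, only a rational function with constant term~$1$. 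Everything then comes down to showing that $Q(t)$ is a \emph{polynomial}: once that is known the degree is forced, since degree is additive on products of polynomials and is the difference of degrees for a polynomial quotient, so that (4.18) gives
\begin{align*}
\deg Q&=\sum_{A\subseteq\{k+1,\dots,n\}}(-1)^{|A|}\deg P_A\\
&=\sum_{A\subseteq\{k+1,\dots,n\}}(-1)^{|A|}(\dim\Delta(f_A))!\,V_{M_{J_A}}(f_A)=\nu(f).
\end{align*}

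To prove that $Q(t)$ has trivial denominator I would mimic the affine (``convenient'') cohomology theory of \cite{AS4}, carried out in the framework of Theorem~4.10: replace $L(b)$ and $B'$ by $L_0(b)$ and $B_0'$, replace the operators $\hat D_i$ by the operators $\hat D_{\ell_i}=\sum_j a_{ij}\hat D_j$ attached to a basis $\{\ell_i\}$ of $L={\rm Hom}_{\mathbb Z}(M_J,{\mathbb Z})$, and replace $\alpha$ by $\tilde\alpha$. Concretely, one builds the Koszul-type complex on $B_0'$ that combines the operators $\hat D_{\ell_i}$ with the boundary maps ``set $x_i=0$'' for $i\in\{k+1,\dots,n\}$, arranged so that the alternating product of the characteristic determinants of Frobenius on its cohomology groups recovers $L({\mathbb T}^k\times{\mathbb A}^{n-k},f;t)^{(-1)^{n-1}}$ (up to the $\delta$-operator, exactly as in the proof of Theorem~4.10); the toric decomposition (4.19) is then reflected by a filtration of this complex whose graded pieces are the torus complexes attached to the various $f_A$. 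Reducing mod $p$, as in \cite[Lemma~2.10]{AS3}, turns this complex into the analogous complex on the graded ring $\bar R={\mathbb F}_q[x^u\mid u\in M_0(f)]$ built from $\{\overline{E_{\ell_i}(f)}\}$ together with the face restrictions. The hypothesis that $f$ is convenient makes the restriction maps onto the subrings supported on the $M_0(f_A)$ behave as expected, and the hypothesis that $f$ is nondegenerate relative to $(\Delta(f),M_J)$ makes the relevant sequences regular by the methods of \cite{KO}; hence in characteristic $p$ the cohomology is concentrated in the top range and there determined by a single group of dimension $\nu(f)$. Monsky's lifting theorem (cf.\ \cite[Theorem~8.5]{M} and the proof of Theorem~4.10) then transports this picture to characteristic $0$, so that $Q(t)=\det(I-t\tilde\alpha_n\mid H^{\rm top})$ is a polynomial of degree~$\nu(f)$.

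The hard part is precisely this last step — making the affine cohomology machinery of \cite{AS4} interact correctly with the restriction of supports to $M_J$. One must check that convenience relative to $\Delta(f)$ still controls the faces of $\Delta(f)$ containing the origin (the faces that index the affine strata) once the ambient lattice has been changed from ${\mathbb Z}^n$ to $M_J$, and that the mod-$p$ reduction of the affine complex really is the Koszul complex on $\bar R$ just described. One cannot sidestep this by $p$-power reduction stratum by stratum: each $f_A$ does admit such a reduction $g_A(x_1^{e_1},\dots)$, but, as noted in the Introduction, the required monomial substitutions differ on different strata and so do not glue to a single substitution on ${\mathbb T}^k\times{\mathbb A}^{n-k}$, which is why the intrinsic treatment on $B_0'$ is needed.
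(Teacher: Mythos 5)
Your first half is exactly the paper's first half: both reduce via (4.17)--(4.19) to showing that the rational function $Q(t)=\prod_A P_A(t)^{(-1)^{|A|}}$ is actually a polynomial, with the degree then following from (4.18) by inclusion--exclusion. For the polynomiality step, however, you and the paper diverge in a meaningful way. You propose to construct a new ``affine'' Koszul-type complex directly on $B_0'$, endow it with a filtration by strata, reduce it mod $p$ to a complex on $\bar R$, re-verify regularity there from convenience and nondegeneracy, and invoke Monsky's lifting theorem a second time. The paper instead reuses the torus complexes $\Omega^\bullet$ and $\Omega_A^\bullet$ already obtained in the proof of Theorem~4.10, and appeals to the Libgober--Sperber exact sequence of complexes
\[
0\to\bar\Omega^\bullet\to\Omega^\bullet\to\bigoplus_{|A|=1}\Omega^\bullet_A[-1]\to\cdots\to\Omega^\bullet_{\{k+1,\dots,n\}}[-n+k]\to 0,
\]
defining $\bar\Omega^\bullet$ as the kernel. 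It then deduces, by induction on $n-k$ and the long exact cohomology sequences, that $H^i(\bar\Omega^\bullet)\cong (H^n(\bar\Omega^\bullet))^{\binom{n-\dim\Delta(f)}{n-i}}$, and reads off $Q(t)=\det(I-t\tilde\alpha_n\mid H^n(\bar\Omega^\bullet))$. This route buys you something: it never needs a fresh mod-$p$ reduction or a separate regularity argument for the affine complex, so what you correctly identify as ``the hard part'' of your sketch is simply absent. Your approach is not wrong, but it imports the full affine machinery of \cite{AS4} where the paper gets by with homological bookkeeping from the torus case. If you flesh out your version, the checkpoints you flagged (that convenience still controls the origin-containing faces after passing to $M_J$, and that the mod-$p$ reduction of the affine complex is the expected Koszul complex on $\bar R$) would indeed have to be verified; the paper's exact-sequence argument lets you skip both.
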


\begin{proof}
It follows from (4.17) and (4.19) that (4.21) holds with
\begin{equation}
Q(t) = \prod_{A\subseteq\{k+1,\dots,n\}} P_A(t)^{(-1)^{|A|}},
\end{equation}
a rational function of degree $\nu(f)$ by (4.18).  It remains only to show
that $Q(t)$ is a polynomial.

In the proof of Theorem~4.10, we constructed a complex $\Omega^\bullet$
satisfying 
\begin{equation}
H^i(\Omega^\bullet)\cong (H^n(\Omega^\bullet))^{\binom{n-\dim\Delta(f)}{n-i}}
\end{equation}
and $L({\mathbb T}^n,f;t)^{(-1)^{n-1}} = P(t)^{\delta^{n-\dim\Delta(f)}}$,
where
\begin{equation}
P(t) = \det(I-t\tilde{\alpha}_n\mid H^n(\Omega^\bullet)).
\end{equation}
Since $f$ is nondegenerate and convenient, each of the polynomials $f_A$
satisfies the hypotheses of that theorem, so analogous assertions are true.
Let 
\[ \Omega_A^\bullet: 0\to\Omega^0_A\to\dots\to\Omega_A^{n-|A|}\to 0 \]
be the corresponding cohomological Koszul complex with differential operators
$\{\hat{D}_i^A\}_{i\not\in A}$ and Frobenius operators
$\{\tilde{\alpha}^A_i\}_{i=0}^{n-|A|}$.  We have  
\begin{equation}
H^i(\Omega^\bullet_A) =
(H^{n-|A|}(\Omega^\bullet_A))^{\binom{n-\dim\Delta(f)}{n-|A|-i}} 
\end{equation}
and $L({\mathbb T}^{n-|A|},f_A;t)^{(-1)^{n-|A|-1}} =
P_A(t)^{n-\dim\Delta(f)}$, where
\begin{equation}
P_A(t) = \det(I-t\tilde{\alpha}^A_{n-|A|}\mid H^{n-|A|}(\Omega^\bullet_A)).
\end{equation}

There is an exact sequence of complexes (see
Libgober-Sperber\cite[Eq.~(4.1)]{LS}) 
\[ \Omega^\bullet\to \bigoplus_{|A|=1} \Omega^\bullet_A[-1]
\to \bigoplus_{|A|=2} \Omega^\bullet_A[-2]\to\dots\to
\Omega^\bullet_{\{k+1,\dots,n\}}[-n+k]\to 0. \]
Let $\bar{\Omega}^\bullet = \ker(\Omega^\bullet\to \bigoplus_{|A|=1}
\Omega_A^\bullet[-1])$, so that there is an exact sequence
\begin{equation}
0\to\bar{\Omega}^\bullet\to \Omega^\bullet\to \bigoplus_{|A|=1}
\Omega^\bullet_A[-1]\to\dots\to
\Omega^\bullet_{\{k+1,\dots,n\}}[-n+k]\to 0. 
\end{equation}
Equations (4.23), (4.25), (4.27),  and induction on $n-k$ show that
\begin{equation}
H^i(\bar{\Omega}^\bullet)\cong
(H^n(\bar{\Omega}^\bullet))^{\binom{n-\dim\Delta(f)}{n-i}}.
\end{equation}
Equation (4.27) implies that
\begin{multline}
\prod_{i=0}^n \det(I-t\tilde{\alpha}_i\mid
H^i(\bar{\Omega}^\bullet))^{(-1)^{i+1}} = \\
\prod_{A\subseteq\{k+1,\dots,n\}}
\biggl( \prod_{i=0}^{n-|A|} \det(I-t\tilde{\alpha}^A_i\mid
H^i(\Omega_A^\bullet))^{(-1)^{i+|A|+1}} \biggr)^{(-1)^{|A|}}. 
\end{multline}
The inner product on the right-hand side of (4.29) equals $L({\mathbb
  T}^{n-|A|},f_A,t)^{(-1)^{|A|}}$, hence by (4.19) the right-hand side equals
$L({\mathbb T}^k\times{\mathbb A}^{n-k},f;t)$.  By (4.28) the left-hand side
equals 
\[ \prod_{i=0}^{n-\dim\Delta(f)} \det(I-tq^i\tilde{\alpha}_n \mid
  H^n(\bar{\Omega}^\bullet))^{(-1)^{n-1}\binom{n-\dim\Delta(f)}{i}}. \]
We thus have
\[ L({\mathbb T}^k\times{\mathbb A}^{n-k},f;t)^{(-1)^{n-1}} =
\det(I-t\tilde{\alpha}_n\mid
H^n(\bar{\Omega}^\bullet))^{\delta^{n-\dim\Delta(f)}}. \]
Comparison with (4.21) then shows that
\[ Q(t) = \det(I-t\tilde{\alpha}_n\mid H^n(\bar{\Omega}^\bullet)), \]
hence $Q(t)$ is a polynomial.
\end{proof}

We explain how to compute the archimedian absolute values of
the roots of the polynomial $Q(t)$ under the hypothesis of
Theorem~4.20.  Take $M=M_J$ and let $g$ be the Laurent polynomial
associated to $f$ by (4.1).  As noted in the proof of Proposition~4.3,
the linear transformation $u_i\mapsto d_i^{-1}u_i$, $i=1,\dots,k$,
identifies the faces $\sigma$ of $\Delta(f)$ with the faces $\sigma'$ of
$\Delta(g)$.  In particular, the face $\Delta(f_A)$ of $\Delta(f)$
will correspond to some face $\sigma'_A$ of $\Delta(g)$.  Let $g_A$
denote the sum of those terms of $g$ whose exponents lie on the face
$\sigma'_A$ (so that $\Delta(g_A) = \sigma'_A$).  By the Remark
preceding Proposition~4.3, we have 
\begin{equation}
L({\mathbb T}^{n-|A|},g^{(A)};t)^{(-1)^{n-|A|-1}} = L({\mathbb
  T}^{n-|A|},f_A;t)^{(-1)^{n-|A|-1}} =
P_A(t)^{\delta^{n-\dim\Delta(f)}}. 
\end{equation}
The nondegeneracy of $f_A$ relative to $(\Delta(f_A),M_{J_A})$ implies
the nondegeneracy of $g_A$ relative to $(\Delta(g_A),{\mathbb
  Z}^{\dim\Delta(g_A)})$.  We can thus apply the results of \cite{AS4}
and \cite{DL} to $g_A$ to compute the number of roots of $P_A(t)$ of a
given archimedian weight.  By (4.22) and the fact that $Q(t)$ is a
polynomial, we then get the number of roots of $Q(t)$ of a given
archimedian weight.

For applications in the next section, we calculate the number of
reciprocal roots of largest possible archimedian absolute value
$q^{(\dim\Delta(f))/2}$ of $Q(t)$.  For $A\neq\emptyset$, all
reciprocal roots of $P_A(t)$ have absolute value
$<q^{(\dim\Delta(f))/2}$, so this is just the number of reciprocal
roots of $P_\emptyset(t)$ of absolute value
$q^{(\dim\Delta(f))/2}$. By (4.30), this can be obtained by
applying \cite[Theorem~1.10]{AS4} to $g$:  the number
$w_{\dim\Delta(f)}$ of reciprocal roots of highest weight is
\begin{equation}
w_{\dim\Delta(f)} =
\sum_{(0,\dots,0)\subseteq\sigma'\subseteq\Delta(g)}
(-1)^{\dim\Delta(g)-\dim\sigma'} (\dim\sigma')!\,V_{{\mathbb
    Z}^{\dim\sigma'}}(\sigma'). 
\end{equation}
Since $\Delta(g)$ is obtained from $\Delta(f)$ by an explicit linear
transformation, we can express this in terms of invariants of
$\Delta(f)$:
\begin{equation}
w_{\dim\Delta(f)} =
\sum_{(0,\dots,0)\subseteq\sigma\subseteq\Delta(f)}
(-1)^{\dim\Delta(f)-\dim\sigma} (\dim\sigma)!\,V_{M_{J_\sigma}}(\sigma),
\end{equation}
where $J_\sigma = J\cap\sigma$.

We note an important special case of this formula.  If every face of
$\Delta(f)$ that contains the origin is of the form $\Delta(f_A)$ for some
$A\subseteq\{k+1,\dots,n\}$, the right-hand side of (4.32) is just
$\nu(f)$.  This gives the following result.

\begin{corollary}
Under the hypothesis of Theorem $4.20$, if every face of $\Delta(f)$
that contains the origin is of the form $\Delta(f_A)$ for some
$A\subseteq\{k+1,\dots,n\}$, then all reciprocal roots of $Q(t)$ have
archimedian absolute value $q^{(\dim\Delta(f))/2}$.
\end{corollary}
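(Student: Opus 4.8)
The plan is to deduce Corollary 4.35 directly from the weight computation recorded in equations (4.32)--(4.34) together with the degree formula of Theorem~4.20. First I would recall the setup: $Q(t)$ is a polynomial of degree $\nu(f)$ by Theorem~4.20, and $\nu(f)$ is the alternating sum $\sum_{A\subseteq\{k+1,\dots,n\}} (-1)^{|A|}(\dim\Delta(f_A))!\,V_{M_{J_A}}(f_A)$. On the other hand, formula (4.34) gives the number $w_{\dim\Delta(f)}$ of reciprocal roots of $Q(t)$ of archimedian absolute value $q^{(\dim\Delta(f))/2}$ as the alternating sum over all faces $\sigma$ of $\Delta(f)$ containing the origin of $(-1)^{\dim\Delta(f)-\dim\sigma}(\dim\sigma)!\,V_{M_{J_\sigma}}(\sigma)$. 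The whole point is a bijective identification of these two index sets.

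The key step is to observe that under the hypothesis of the corollary, the map $A\mapsto \Delta(f_A)$ is a bijection from subsets $A\subseteq\{k+1,\dots,n\}$ onto the set of faces of $\Delta(f)$ that contain the origin. It is injective because $f$ is convenient: $\dim\Delta(f_A) = \dim\Delta(f) - |A|$, and more precisely $A$ is recovered from $\Delta(f_A)$ as the set of coordinate directions $i\in\{k+1,\dots,n\}$ along which that face has zero extent (since setting $x_i=0$ kills exactly the monomials with positive $i$-th exponent, and convenience prevents any collapse); and it is surjective precisely by the stated hypothesis. Under this bijection $|A| = \dim\Delta(f) - \dim\Delta(f_A)$, so $(-1)^{|A|} = (-1)^{\dim\Delta(f)-\dim\sigma}$ with $\sigma = \Delta(f_A)$, and $J_A = J_\sigma$, hence $M_{J_A} = M_{J_\sigma}$ and $V_{M_{J_A}}(f_A) = V_{M_{J_\sigma}}(\sigma)$. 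Therefore the sum defining $\nu(f)$ in Theorem~4.20 is term-by-term equal to the sum defining $w_{\dim\Delta(f)}$ in (4.34), so $\deg Q(t) = \nu(f) = w_{\dim\Delta(f)}$.

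The conclusion is then immediate: every reciprocal root of $Q(t)$ has absolute value at most $q^{(\dim\Delta(f))/2}$ (this is part of what the results of \cite{AS4}, \cite{DL} give for each $P_A(t)$, hence for the product (4.22)), and exactly $w_{\dim\Delta(f)} = \deg Q(t)$ of them attain this bound; so all of them do.

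I expect the only real point requiring care to be the injectivity-and-surjectivity claim for $A\mapsto\Delta(f_A)$ — specifically, verifying that convenience really does force $\dim\Delta(f_A)$ to drop by exactly $|A|$ in such a way that $A$ is combinatorially determined by the face, and checking that the faces arising this way are exactly the faces through the origin under the corollary's hypothesis. Everything else is bookkeeping: matching signs, exponent sets, and normalized volumes across the bijection, and invoking the already-established fact that $Q(t)$ is a polynomial of degree $\nu(f)$ whose reciprocal roots are bounded by $q^{(\dim\Delta(f))/2}$.
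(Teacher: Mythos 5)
Your argument is correct and is essentially the paper's own: the text preceding the corollary observes that under the stated hypothesis the alternating-sum formula (4.32) for the count $w_{\dim\Delta(f)}$ of top-weight reciprocal roots reduces term-by-term to $\nu(f)=\deg Q(t)$, and the conclusion follows since no reciprocal root of $Q(t)$ can exceed weight $q^{(\dim\Delta(f))/2}$. You have simply spelled out the bijection $A\mapsto\Delta(f_A)$ and the matching of signs and normalized volumes that the paper leaves implicit.
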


As a special case of Corollary 4.33, we note the following result.
\begin{corollary}
If $f\in{\mathbb F}_q[x_1,\dots,x_n]$ is nondegenerate relative to
$(\Delta(f),M_J)$ and convenient, then $L({\mathbb
  A}^n,f;t)^{(-1)^{n-1}}$ is a polynomial of degree $\nu(f)$ all of
whose reciprocal roots have absolute value $q^{n/2}$. 
\end{corollary}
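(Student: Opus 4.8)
The plan is to reduce the statement to Corollary~4.33 by verifying its hypotheses. Since $f\in{\mathbb F}_q[x_1,\dots,x_n]$ we are in the case $k=0$ of Theorem~4.20, so ${\mathbb T}^k\times{\mathbb A}^{n-k}={\mathbb A}^n$ and all exponents in $J$ lie in ${\mathbb N}^n$; and $f$ is assumed nondegenerate relative to $(\Delta(f),M_J)$ and convenient, which is exactly the hypothesis of Theorem~4.20. I would first note that $\dim\Delta(f)=n$: applying convenience with $A=\{1,\dots,n\}$ gives $\dim\Delta(f_A)=\dim\Delta(f)-n$, and $f_A$ is the constant term of $f$, so $\Delta(f_A)=\{(0,\dots,0)\}$ has dimension $0$. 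Hence Theorem~4.20 gives that $L({\mathbb A}^n,f;t)^{(-1)^{n-1}}=Q(t)^{\delta^{\,n-\dim\Delta(f)}}=Q(t)$ is a polynomial of degree $\nu(f)$, and it remains only to check the extra hypothesis of Corollary~4.33 --- that every face of $\Delta(f)$ containing the origin has the form $\Delta(f_A)$ for some $A\subseteq\{1,\dots,n\}$ --- after which Corollary~4.33 yields that every reciprocal root of $Q(t)$ has absolute value $q^{(\dim\Delta(f))/2}=q^{n/2}$.

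To verify that remaining hypothesis, the key point is that convenience forces $\Delta(f)$ to meet every coordinate axis in a segment of positive length: applying convenience with $A=\{1,\dots,n\}\setminus\{i\}$ shows $\dim\Delta(f_A)=1$, so $J$ contains a point $c_ie_i$ with $c_i>0$, whence $c_ie_i\in\Delta(f)$. Now let $\sigma$ be a face of $\Delta(f)$ containing the origin. Since $0\in\sigma$, we may write $\sigma=\Delta(f)\cap\{\phi=0\}$ for a linear form $\phi$ with $\phi\ge 0$ on $\Delta(f)$ (take $\phi=0$ if $\sigma=\Delta(f)$). Evaluating $\phi$ at the points $c_ie_i$ shows that every coefficient $\phi_i$ of $\phi$ satisfies $\phi_i\ge 0$. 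Set $A=\{i:\phi_i>0\}$. Because $J\subseteq{\mathbb N}^n$, one has $\Delta(f)\cap\{x_i=0\text{ for }i\in A\}=\Delta(f_A)$, and I claim $\sigma=\Delta(f_A)$: if $p\in\sigma$ then $0=\phi(p)=\sum_i\phi_ip_i$ is a sum of nonnegative terms, so $p_i=0$ whenever $\phi_i>0$, giving $\sigma\subseteq\Delta(f_A)$; conversely if $p\in\Delta(f_A)$ then $p_i=0$ for $i\in A$ while $\phi_i=0$ for $i\notin A$ (since $\phi_i\ge 0$ and $i\notin A$), so $\phi(p)=0$ and $p\in\sigma$. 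This establishes the hypothesis of Corollary~4.33.

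I expect the convex-geometric step of the second paragraph to be the main obstacle; in particular, the crucial recognition is that it is convenience --- through the fact that $\Delta(f)$ then hits every coordinate axis --- that forces the supporting form of a face through the origin to have nonnegative coefficients. Without that input the conclusion can fail: for example $f=x_1x_2+x_2x_3+x_1x_3$ is not convenient, and its Newton polyhedron has a two-dimensional face through the origin (cut out by $\{x_1+x_2=x_3\}$) that is not of the form $\Delta(f_A)$. Two small ancillary facts, both immediate from $J\subseteq{\mathbb N}^n$, are used along the way: the identity $\Delta(f)\cap\{x_i=0,\ i\in A\}=\Delta(f_A)$, and that the supporting functional of a face containing the origin may be taken linear and nonnegative on $\Delta(f)$.
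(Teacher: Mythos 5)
Your proof is correct and follows the same route as the paper: Corollary 4.34 is stated there as a direct special case of Corollary 4.33, with the verification of its hypotheses left implicit. You have correctly supplied those missing details---the specialization $k=0$, the observation that convenience with $A=\{1,\dots,n\}$ forces $\dim\Delta(f)=n$, and the convex-geometry argument (using that convenience puts a point $c_ie_i$ with $c_i>0$ on each axis, hence the supporting linear form of any origin-containing face has nonnegative coefficients) showing that every such face is $\Delta(f_A)$ for $A=\{i:\phi_i>0\}$.
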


\section{Examples}

We explain how Theorem 4.20 implies a generalization of the
result of N. Katz quoted in the Introduction.  
\begin{proposition}
Let $f\in{\mathbb F}_q[x_1,\dots,x_n]$ have degree
$d=p^ke$, $(e,p) = 1$, and 
suppose that every monomial appearing in $f$ has degree divisible by $p^k$.
If $f^{(d)}$, the homogeneous part of $f$ of degree $d$, defines a smooth
hypersurface in ${\mathbb P}^{n-1}$, then $L({\mathbb A}^n,f;t)^{(-1)^{n-1}}$
is a polynomial of degree
\begin{equation}
\nu(f) = \frac{1}{p^k}((d-1)^n + (-1)^n(p^k-1))
\end{equation}
all of whose reciprocal roots have absolute value $q^{n/2}$.
\end{proposition}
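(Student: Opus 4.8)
The plan is to obtain the proposition from Corollary~4.34, after first normalizing $f$ and then identifying $M_J$. As a preliminary step I would arrange, via a linear change of coordinates on $\mathbb{A}^n$, that $f^{(d)}$ contains every monomial $x_i^d$ with nonzero coefficient and, more generally, that each restriction $f^{(d)}|_{x_i=0\,(i\in A)}$ still defines a smooth hypersurface in $\mathbb{P}^{n-|A|-1}$. This is harmless: a linear change leaves $L(\mathbb{A}^n,f;t)$ unchanged, respects the grading by total degree (so it preserves the hypothesis that every monomial of $f$ has degree divisible by $p^k$) and the smoothness of $f^{(d)}$; and by Bertini a generic linear section of a smooth projective hypersurface is smooth, so a generic element of $\mathrm{GL}_n$ realizes this for all $2^n$ coordinate subspaces at once. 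If $\mathbb{F}_q$ is too small to contain such an element one passes to a finite extension $\mathbb{F}_{q^m}$, the degree and archimedean absolute values of the reciprocal roots of $L(\mathbb{A}^n,f;t)^{(-1)^{n-1}}$ being unaffected by this. After the reduction $\Delta(f)$ is the $d$-fold dilate of the standard $n$-simplex; in particular $\dim\Delta(f)=n$, $f$ is convenient, and every face of $\Delta(f)$ containing the origin is of the form $\Delta(f_A)$.

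Next I would identify $M_J$ and check nondegeneracy together. Since every monomial of $f$ has degree divisible by $p^k$, $\mathbb{Z}\langle J\rangle$ lies in $\Lambda:=\{u\in\mathbb{Z}^n:p^k\mid|u|\}$, a lattice of index $p^k$ in $\mathbb{Z}^n$ with Smith form $(p^k,1,\dots,1)$ relative to $\mathbb{Z}^n$; a dual basis of $L=\mathrm{Hom}(\Lambda,\mathbb{Z})$ is given by $E_{\ell_1}(x^u)=(|u|/p^k)x^u$ and $E_{\ell_i}(x^u)=u_ix^u$ for $i\ge2$. The faces of $\Delta(f)$ not containing the origin are exactly the $\sigma_A=\{u\ge0:|u|=d,\ u_i=0\ (i\in A)\}$ with $A\subsetneq\{1,\dots,n\}$, and $f_{\sigma_A}=f^{(d)}|_{x_i=0\,(i\in A)}$, a homogeneous polynomial of degree $d$ on which $E_{\ell_1}$ acts as multiplication by $e$, which is nonzero in $\mathbb{F}_q$ since $(e,p)=1$. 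Using the Euler relation $\sum_j x_j\partial_j f_{\sigma_A}=d\,f_{\sigma_A}=0$ (valid because $p\mid d$) to recover the one missing partial, one sees that a common zero of $E_{\ell_1}(f_{\sigma_A}),x_2\partial_2f_{\sigma_A},\dots,x_n\partial_nf_{\sigma_A}$ in $(\bar{\mathbb{F}}_q^\times)^n$ is precisely a singular point of $f^{(d)}|_{x_i=0\,(i\in A)}$ lying in a torus — and these were excluded in Step~1. Hence $f$ is nondegenerate relative to $(\Delta(f),\Lambda)$. By Proposition~4.5(a) this forces $\Lambda\subseteq M_J$, so $M_J=\Lambda$, $[\mathbb{Z}^n:M_J]=p^k$, and $f$ is nondegenerate relative to $(\Delta(f),M_J)$.

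Now $f$ is nondegenerate relative to $(\Delta(f),M_J)$ and convenient, so Corollary~4.34 gives that $L(\mathbb{A}^n,f;t)^{(-1)^{n-1}}$ is a polynomial of degree $\nu(f)$ all of whose reciprocal roots have absolute value $q^{n/2}$; undoing the base change of Step~1 gives the same statement over the original $\mathbb{F}_q$. It remains to evaluate $\nu(f)=\sum_{A\subseteq\{1,\dots,n\}}(-1)^{|A|}(\dim\Delta(f_A))!\,V_{M_{J_A}}(f_A)$. For $|A|<n$, $\Delta(f_A)$ is the $d$-fold dilate of the standard $(n-|A|)$-simplex and $M_{J_A}=\{u\in\mathbb{Z}^{n-|A|}:p^k\mid|u|\}$ by (4.16), so the Lebesgue normalization gives $(\dim\Delta(f_A))!\,V_{M_{J_A}}(f_A)=d^{\,n-|A|}/p^k$, while the term $A=\{1,\dots,n\}$ contributes $(-1)^n$. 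Summing by the binomial theorem,
\[
\nu(f)=\frac{1}{p^k}\sum_{j=0}^{n-1}\binom{n}{j}(-1)^jd^{\,n-j}+(-1)^n=\frac{1}{p^k}\bigl((d-1)^n-(-1)^n\bigr)+(-1)^n=\frac{(d-1)^n+(-1)^n(p^k-1)}{p^k},
\]
which is the asserted degree.

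The main obstacle will be the nondegeneracy step: one must translate smoothness of $f^{(d)}$ and of its coordinate sections, through the operators $E_{\ell_i}$ attached to the non-saturated lattice $\Lambda$, into nondegeneracy relative to $(\Delta(f),\Lambda)$ — and recognize that this is exactly what pins down $M_J$. The Euler relation is indispensable here precisely because $p\mid d$ makes the naive use of the partials $\partial_if^{(d)}$, the argument one would run when $p\nmid d$, unavailable.
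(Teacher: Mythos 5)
Your proposal follows essentially the same route as the paper: generic linear change of coordinates via Bertini over a finite extension to make $f$ convenient and all coordinate sections of $f^{(d)}$ smooth; introduction of the lattice $\Lambda=\{u\in\mathbb{Z}^n : p^k\mid |u|\}$ with the dual basis headed by $\ell_1(u)=|u|/p^k$; the Euler-relation trick (available precisely because $p\mid d$) to reduce nondegeneracy to smoothness of the $f^{(d)}_A$; Proposition 4.5(a) to pin down $M_J$; and then Theorem 4.20/Corollary 4.34 together with the binomial-theorem computation of $\nu(f)$. The one place where you are slightly terser than the paper is in concluding $M_J=\Lambda$: from ``$\mathbb{Z}\langle J\rangle\subseteq\Lambda$'' and ``Proposition 4.5(a) gives $\Lambda\subseteq M_J$'' you still need the easy containment $M_J\subseteq\Lambda$ (if $u\in M_J$ then $cu\in\mathbb{Z}\langle J\rangle\subseteq\Lambda$ for some $c$ prime to $p$, and since $\mathbb{Z}^n/\Lambda$ is $p$-torsion this forces $u\in\Lambda$), which the paper spells out; once that sentence is added the argument is complete and matches the paper's.
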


\begin{proof}
Let ${\bf e}^{(1)},\dots,{\bf e}^{(n)}$ denote the standard basis for
${\mathbb R}^n$.  Over any sufficiently large extension field of ${\mathbb
  F}_q$, we can make a coordinate change on ${\mathbb A}^n$ so that $f$ is
convenient and for any $A\subseteq\{1,\dots,n\}$ the intersection of $f^{(d)}
= 0$ with the coordinate hyperplanes $\{x_i = 0\}_{i\in A}$ is smooth.  In
particular, the equations $f_A^{(d)} = 0$ define smooth hypersurfaces in
${\mathbb P}^{n-|A|-1}$.  The Newton polyhedron $\Delta(f)$ is then the
simplex in ${\mathbb R}^n$ with vertices at the origin and the points
$\{d{\bf e}^{(i)}\}_{i=1}^n$.  The faces of $\Delta(f)$ not containing the
origin are the convex hulls of the sets $\{d{\bf e}^{(i)}\}_{i\in A}$.  It
will be simpler to index these faces by their complements: let $\sigma_A$
denote the face which is the convex hull of $\{d{\bf e}^{(i)}\}_{i\not\in A}$. 

Write $f=\sum_{j\in J}a_jx^j$, $J$ a finite subset of ${\mathbb N}^n$.  Let
$M\subseteq{\mathbb Z}^n$ be the subgroup
\[ M = \{(u_1,\dots,u_n)\in{\mathbb Z}^n\mid \text{$u_1+\dots+u_n$ is
  divisible by $p^k$}\}. \]
Since all monomials in $f$ have degree divisible by $p^k$, it follows
that ${\mathbb Z}\langle J\rangle\subseteq M$.  In fact, $M_J\subseteq M$.  To
see this, let $(u_1,\dots,u_n)\in M_J$.  By definition, there exists an
integer $c$ prime to $p$ such that $c(u_1,\dots,u_n)\in{\mathbb Z}\langle
J\rangle$.  This implies that $c\sum_{i=1}^n u_i$ is divisible by $p^k$.  But
since $(c,p) = 1$, one has $\sum_{i=1}^n u_i$ divisible by $p^k$, therefore
$(u_1,\dots,u_n)\in M$. 

We claim that $f$ is nondegenerate relative to $(\Delta(f),M)$.  As basis for
$M$ we take the elements
\[ (p^k,0,\dots,0)\cup\{(-1,0,\dots,0,1,0,\dots,0)\}_{i=2}^n, \]
where the ``1'' occurs in the $i$-th position, and as basis for $L={\rm
  Hom}_{\mathbb Z}(M,{\mathbb Z})$ we take the ``dual basis'', namely, the
  linear forms  
\[ \ell_1(u_1,\dots,u_n) = p^{-k}(u_1+\dots+u_n) \]
and
\[ \ell_i(u_1,\dots,u_n) = u_i \]
for $i=2,\dots,n$.  Let $A\subseteq\{1,\dots,n\}$ and let $\sigma_A$ be the
face of $\Delta(f)$ defined above.  Note that
\[ f_{\sigma_A} := \sum_{j\in J\cap\sigma_A} a_jx^j = f^{(d)}_A. \]
We must thus check that $\{E_{\ell_i}(f^{(d)}_A)\}_{i=1}^n$ have no common
zero in $(\bar{\mathbb F}_q^\times)^n$.  But 
\[ E_{\ell_1}(f^{(d)}_A) = e^{-1}f^{(d)}_A \] 
and
\[ E_{\ell_i}(f^{(d)}_A) = x_i\frac{\partial f^{(d)}_A}{\partial x_i} \]
for $i=2,\dots,n$, so we must show that the system 
\begin{equation}
f^{(d)}_A = x_2\frac{\partial f^{(d)}_A}{\partial x_2} = \dots =
  x_n\frac{\partial f^{(d)}_A}{\partial x_n} = 0 
\end{equation}
has no solution in $(\bar{\mathbb F}_q^\times)^n$.  Since $p\,|\, d$,
the Euler relation implies that any common zero of $\{x_i\partial
f_A^{(d)}/\partial x_i\}_{i=2}^n$ is also a zero of $x_1\partial
f_A^{(d)}/\partial x_1$, thus the system $(5.3)$ is equivalent to the system
\begin{equation}
f^{(d)}_A = x_1\frac{\partial f^{(d)}_A}{\partial x_1} = \dots =
  x_n\frac{\partial f^{(d)}_A}{\partial x_n} = 0.
\end{equation}
Furthermore, $x_i$ does not appear in $f_A$ if $i\in A$, hence the solutions
of (5.4) in $(\bar{\mathbb F}_q^\times)^n$ are exactly the solutions of the
set 
\begin{equation}
\{f_A^{(d)}\}\cup \{\partial f^{(d)}_A/\partial x_i\}_{i\not\in A} 
\end{equation}
in $(\bar{\mathbb F}_q^\times)^n$.  However, the equation $f^{(d)}_A=0$
defines a smooth hypersurface in ${\mathbb P}^{n-|A|-1}$, so any common zero
of the set (5.5) must have $x_i=0$ for all $i\not\in A$.  In particular,
(5.5) has no common zero in $(\bar{\mathbb F}_q^\times)^n$.  This implies
that (5.4) has no solution in $(\bar{\mathbb F}_q^\times)^n$, proving the
nondegeneracy of $f$ relative to $(\Delta(f),M)$.  

We can now compute $\nu(f)$.  By Proposition 4.5(a) we have $M=M_J$, so
\[ [{\mathbb Z}^{n-|A|}:M_{J_A}] = p^k\quad \text{for all
  $A\neq\{1,\dots,n\}$} \]
and
\[ (n-|A|)!\,V(f_A)/[{\mathbb Z}^{n-|A|}:M_{J_A}] = \begin{cases}
  d^{n-|A|}/p^k & \text{if $A\neq\{1,\dots,n\}$,} \\ 1 & \text{if $A
  = \{1,\dots,n\}$.} \end{cases} \]
Then clearly
\[ \nu(f) = \frac{1}{p^k}((d-1)^n + (-1)^n(p^k-1)) \]
and the assertions of Proposition~5.1 follow from Theorem~4.20.  Finally, note
that if $L({\mathbb A}^n,f;t)^{(-1)^{n-1}}$ is a polynomial of degree (5.2)
over all sufficiently large extension fields of ${\mathbb F}_q$, then the
same is true over ${\mathbb F}_q$ itself.  The assertion about the
absolute value of the roots follows immediately from Corollary~4.34.
\end{proof}

{\it Remark}.  There are many results in the literature that, like
Proposition~5.1, assert that $L({\mathbb A}^n,f;t)^{(-1)^{n-1}}$ is a
polynomial if $f^{(d)}$ defines a smooth hypersurface and some
additional condition is satisfied (see \cite[Th\'{e}or\`{e}me 8.4]{D},
\cite[Theorem~1.11 and the following remark]{AS5},
\cite[Theorem~3.6.5]{K}, \cite[Theorem~3.1]{AS6}).  One might ask if
any additional condition is really necessary.  Consider the
three-variable polynomial 
\[ f = (z^p-z) +x^{p-1}y + y^{p-1}z. \]
The homogeneous part of degree $p$ is smooth but $f$ has the same
$L$-function as
\[ g = x^{p-1}y + y^{p-1}z. \]
Since $\sum_{z\in{\mathbb F}_q} \Psi(y^{p-1}z) = 0$ if $y\neq 0$, one
calculates that $\sum_{x,y,z\in{\mathbb F}_q}\Psi(g(x,y,z)) = q^2$.
This gives $L({\mathbb A}^3,f;t) = (1-q^2t)^{-1}$, showing that
smoothness of $f^{(d)}$ alone is not sufficient to guarantee that
$L({\mathbb A}^n,f;t)^{(-1)^{n-1}}$ will be a polynomial.

We apply Theorem 4.20 to compute the zeta functions of some possibly
singular hypersurfaces.  Let $f\in{\mathbb F}_q[x_1,\dots,x_n]$ be a
homogeneous polynomial and let $X\subseteq{\mathbb P}^{n-1}$ be the
hypersurface $f=0$.  Write the zeta function $Z(X/{\mathbb
  F}_q,t)$ of $X$ in the form 
\begin{equation}
Z(X/{\mathbb F}_q,t) =
  \frac{R(t)^{(-1)^{n-1}}}{(1-t)(1-qt)\dots (1-q^{n-2}t)}, 
\end{equation}
where $R(t)$ is a rational function.  The exponential sum associated
to the polynomial $yf\in{\mathbb F}_q[x_1,\dots,x_n,y^{\pm 1}]$ can be
used to count points on the projective hypersurface~$X$.  The precise
relation is given in \cite[Eq.~(6.14)]{AS3}:
\begin{equation}
L({\mathbb A}^n\times{\mathbb T},yf;t)^{(-1)^n} =
R(qt)^\delta.  
\end{equation}
\begin{proposition}
Suppose that $yf\in{\mathbb F}_q[x_1,\dots,x_n,y^{\pm 1}]$ is
nondegenerate relative to $(\Delta(yf),M_J)$ and convenient.  Then
$R(t)$ is a polynomial of degree $\nu(yf)$ all of whose reciprocal
roots have absolute value $q^{(n-2)/2}$.
\end{proposition}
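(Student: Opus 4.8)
The plan is to derive Proposition~5.8 directly from Theorem~4.20 applied to the Laurent polynomial $yf$, together with the relation (5.7) and the root-counting machinery developed after the proof of Theorem~4.20. First I would invoke Theorem~4.20 with the polynomial $yf\in{\mathbb F}_q[x_1,\dots,x_n,y^{\pm1}]$, which is convenient and nondegenerate relative to $(\Delta(yf),M_J)$ by hypothesis. The torus factor here is the single variable $y$, so $k=1$ in the notation of Section~4. Theorem~4.20 gives that $L({\mathbb A}^n\times{\mathbb T},yf;t)^{(-1)^{n}}=Q(t)^{\delta^{n+1-\dim\Delta(yf)}}$ with $Q(t)$ a polynomial of degree $\nu(yf)$. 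Since $f$ is homogeneous of degree $d$ (say), $\Delta(yf)$ is the simplex on the origin and the points $d{\bf e}^{(i)}+{\bf e}^{(n+1)}$, which has dimension $n$, so the exponent $\delta^{n+1-\dim\Delta(yf)}$ is just $\delta^1=\delta$. Comparing with (5.7), $L({\mathbb A}^n\times{\mathbb T},yf;t)^{(-1)^n}=R(qt)^\delta$, and using that both $Q(t)$ and $R(qt)$ are polynomials with constant term $1$ while the operator $g\mapsto g^\delta$ has kernel consisting only of powers of $1-qt$ among such polynomials up to the relevant ambiguity, I would conclude $R(qt)=Q(t)$ up to a factor that is a power of $1-qt$; a degree/weight bookkeeping argument (or the stronger identification of $Q$ with a Frobenius characteristic polynomial from the proof of Theorem~4.20) pins this down so that $R(t)$ is a polynomial of degree $\nu(yf)$.

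Next I would address the absolute values of the reciprocal roots. The mechanism is exactly the one spelled out in the paragraphs following the proof of Theorem~4.20: take $M=M_J$, let $g$ be the Laurent polynomial associated to $yf$ by (4.1) — concretely obtained by a coordinate change making the $p$-part of the index disappear — and note that each $P_A(t)$ for $A\neq\emptyset$ has all reciprocal roots of absolute value strictly less than $q^{\dim\Delta(yf)/2}=q^{n/2}$, while the reciprocal roots of $P_\emptyset(t)$ of largest absolute value $q^{n/2}$ are counted by the number $w_n$ of (4.31)–(4.32). I would then apply Corollary~4.33: the key point is that for $yf$ with $f$ homogeneous and $f^{(d)}=f$ defining a smooth hypersurface, \emph{every} face of $\Delta(yf)$ containing the origin is of the form $\Delta((yf)_A)$ for some $A\subseteq\{1,\dots,n\}$ (the faces containing the origin are spanned by subsets of $\{d{\bf e}^{(i)}+{\bf e}^{(n+1)}\}$ together with the origin, which is precisely what one gets by setting some $x_i=0$), so Corollary~4.33 applies and tells us that \emph{all} reciprocal roots of $Q(t)$ have absolute value $q^{n/2}$. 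Translating through $R(qt)=Q(t)$, this says the reciprocal roots of $R(t)$ have absolute value $q^{n/2}/q=q^{(n-2)/2}$, as claimed.

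The step I expect to be the main obstacle is verifying that the hypothesis ``every face of $\Delta(yf)$ containing the origin is of the form $\Delta((yf)_A)$'' is genuinely satisfied, so that Corollary~4.33 can be invoked, and relatedly checking that $yf$ convenient forces the needed smoothness on the coordinate sections. One must be careful that $f$ is not assumed here to be convenient as a polynomial on ${\mathbb A}^n$ — convenience is imposed directly on $yf$ as a Laurent polynomial in the $x_i$ and $y^{\pm1}$. I would argue that since $\Delta(yf)$ lives at ``$y$-height $1$'' and is the pyramid over $\Delta(f)\subset\{y=1\}$ with apex the origin, a face $\tau\ni 0$ is either $\{0\}$ itself or the pyramid over a face of $\Delta(f)$; convenience of $yf$ then guarantees these faces of $\Delta(f)$ are exactly the $\Delta(f_A)$, and hence $\tau=\Delta((yf)_A)$. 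A secondary technical point is the passage between the identity (5.7) for $R(qt)$ and the polynomiality/degree statement for $R(t)$ itself: I would handle this by noting (as in the proof of Theorem~4.20) that $Q(t)$ is the characteristic polynomial of a Frobenius operator on a finite-dimensional cohomology group $H^{n+1}(\bar\Omega^\bullet)$, so it has no reciprocal root equal to $q$ once the weight analysis shows all its reciprocal roots have absolute value $q^{n/2}$ (and $q^{n/2}\neq q$ for $n\neq 2$; the case $n=2$ can be treated separately since then $f$ is a binary form and the statement is elementary), which removes any ambiguity in solving $R(qt)^\delta=Q(t)^\delta$.
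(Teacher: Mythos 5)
Your approach is the same as the paper's, which treats the proposition as an immediate consequence of Theorem~4.20 (applied to $(5.7)$) and Corollary~4.33; the paper's proof is two sentences and your proposal is an accurate unpacking of it.  Two small points are worth flagging.  First, your worry about ambiguity in passing from $Q(t)^\delta = R(qt)^\delta$ to $Q(t)=R(qt)$ is unfounded, and the specific claim that the kernel of $\delta$ consists of ``powers of $1-qt$'' is wrong: if $g$ is a power series or rational function with constant term $1$ and $g^\delta=1$, then $g(t)=g(qt)$ forces $g=1$ by comparing coefficients (note $(1-qt)^\delta = (1-qt)/(1-q^2t)\neq 1$), so $\delta$ is injective and $Q(t)=R(qt)$ outright, no weight argument or $n=2$ case split needed.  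Second, the assertion that ``convenience of $yf$ then guarantees these faces of $\Delta(f)$ are exactly the $\Delta(f_A)$'' is the real content of the Corollary-4.33 hypothesis and is stated without proof; the argument you need is that convenience of $yf$ applied to $A=\{1,\dots,n\}\setminus\{i\}$ forces $J_A\neq\emptyset$, and since $f$ is homogeneous of degree $d$ the only possible element is $d{\bf e}^{(i)}$, so $d{\bf e}^{(i)}\in J$ for every $i$; hence ${\rm conv}(J)$ is the full scaled standard $(n-1)$-simplex, every face of it is ${\rm conv}(J_A)$ for some $A$, and $\Delta(yf)$ is the pyramid with apex $0$ over ${\rm conv}(J)\times\{1\}$ (not over $\Delta(f)\times\{1\}$ as you wrote, since $(0,1)$ is not an exponent of $yf$), whose faces through the origin are therefore exactly the $\Delta((yf)_A)$.
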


\begin{proof}
The assertion about the degree of $R(t)$ follows immediately by
applying Theorem~4.20 to~(5.7).  The assertion about the absolute
values of the roots of $R(t)$ follows immediately from Corollary~4.33.
\end{proof}
 
As an illustration of Proposition 5.8, consider the projective
hypersurface $X\subseteq{\mathbb P}^{n-1}$ over 
${\mathbb F}_q$ defined by the homogeneous equation 
\[ f(x_1,\dots,x_n) = x_1^n + \dots +x_n^n + \lambda x_1\dots x_n =
  0, \] 
where $\lambda\in{\mathbb F}_q$.  If $p\nmid n$, this hypersurface is
smooth for all but finitely many values of $\lambda$.  If $p\,|\,n$,
it is a singular hypersurface for all nonzero $\lambda$ (except in the
cases $p=n=2$ and $p=n=3$).  We describe the zeta function when
$p\,|\,n$. 

\begin{corollary}
Suppose that $n=p^ke$, where $k\geq 1$ and $(p,e) = 1$, and $\lambda\neq 0$.
Then $R(t)$ is a polynomial of degree
\begin{equation}
\deg R(t) = (p^k-1)e^{n-1} + e^{-1}((e-1)^n + (-1)^n(e-1))
\end{equation}
all of whose reciprocal roots have absolute value $q^{(n-2)/2}$.
\end{corollary}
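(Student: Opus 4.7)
The plan is to apply Proposition~5.8 to $yf\in{\mathbb F}_q[x_1,\dots,x_n,y^{\pm 1}]$. The Newton polyhedron $\Delta(yf)\subset{\mathbb R}^{n+1}$ is the $n$-simplex with vertices the origin and $v_i:=n{\bf e}_i+{\bf e}_{n+1}$ for $i=1,\dots,n$; the monomial $\lambda x_1\cdots x_n y$ contributes the point $w:={\bf e}_1+\cdots+{\bf e}_{n+1}=\frac{1}{n}\sum v_i$, which lies in the relative interior of the top facet $\sigma_{\rm top}:={\rm conv}(v_1,\dots,v_n)$. Convenience of $yf$ in the affine variables is immediate, and every face of $\Delta(yf)$ containing the origin has the form ${\rm conv}(\{0\}\cup\{v_i\}_{i\in S})=\Delta((yf)_A)$ for $A=\{1,\dots,n\}\setminus S$, which is precisely the extra hypothesis needed for Corollary~4.33 to deliver the archimedian assertion.

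The first technical step is to identify $M_J$. A Smith-normal-form calculation gives $[{\mathbb Z}^{n+1}\cap{\mathbb R}\langle J\rangle:{\mathbb Z}\langle J\rangle]=n^{n-2}$, and hence $[{\mathbb Z}^{n+1}\cap{\mathbb R}\langle J\rangle:M_J]=p^{k(n-2)}$. Projecting onto the first $n$ coordinates identifies $M_J$ with the sublattice of $(u_1,\dots,u_n)\in{\mathbb Z}^n$ satisfying $u_1\equiv\cdots\equiv u_n\pmod{p^k}$ and $\sum u_i\equiv 0\pmod e$; dualising the explicit ${\mathbb Z}$-basis $\{n{\bf e}_1,\,p^k({\bf e}_i-{\bf e}_1)_{i=2}^{n-1},\,(1,\dots,1)\}$ yields the differential operators
\[ E_{\ell_1}=y\partial_y-x_n\partial_{x_n},\quad E_{\ell_i}=p^{-k}(x_i\partial_{x_i}-x_n\partial_{x_n})\text{ for }2\leq i\leq n-1,\quad E_{\ell_n}=x_n\partial_{x_n}. \]

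For nondegeneracy, $(yf)_{\sigma_{\rm top}}=yf$ (since $w$ lies in its interior), and because $p\mid n$,
\[ E_{\ell_n}(yf)=y(nx_n^n+\lambda x_1\cdots x_n)=\lambda yx_1\cdots x_n, \]
which is nowhere zero on the torus---this single operator rules out a common zero, and this is where the hypothesis $\lambda\neq 0$ enters. For a proper subfacet $\sigma_S={\rm conv}(\{v_i\}_{i\in S})$ with $\emptyset\neq S\subsetneq\{1,\dots,n\}$, one has $(yf)_{\sigma_S}=y\sum_{i\in S}x_i^n$, and the system $\{E_{\ell_j}((yf)_{\sigma_S})=0\}$ reduces to $\sum_{i\in S}x_i^n=0$ together with $x_i^n[i\in S]=x_n^n[n\in S]$ for $2\leq i\leq n-1$. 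A case analysis on whether $n\in S$ and on $S\cap\{2,\dots,n-1\}$ forces some $x_j=0$ in every case (in the borderline case $S=\{2,\dots,n\}$, combining $x_2^n=\cdots=x_n^n$ with $(n-1)x_n^n=0$ and $p\nmid n-1$), contradicting $x\in(\bar{\mathbb F}_q^\times)^n$. I expect this to be the main obstacle: the classical definition of nondegeneracy (using ${\mathbb Z}^{n+1}\cap{\mathbb R}\langle J\rangle$) fails on the proper subfacets because the Euler derivatives of the Fermat terms vanish mod $p$, and it is essential that the non-$p$-integral operators $E_{\ell_i}$ become available when we pass to $M_J$.

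With the hypotheses of Proposition~5.8 verified, it remains to compute $\nu(yf)=\sum_A(-1)^{|A|}(\dim\Delta((yf)_A))!\,V_{M_{J_A}}((yf)_A)$. A direct Lebesgue-volume computation via the projections $\pi_A$, using the lattice indices $[{\mathbb Z}^n:\pi(M_J)]=ep^{k(n-1)}$ for $A=\emptyset$ (with the point $w$ enlarging the lattice by a factor of $p^k$) and $[{\mathbb Z}^{n-|A|}:\pi_A(M_{J_A})]=ep^{k(n-|A|)}$ for $A\neq\emptyset$, gives the contributions $p^ke^{n-1}$ for $A=\emptyset$, $e^{n-|A|-1}$ for $\emptyset\neq A\subsetneq\{1,\dots,n\}$, and $1$ for $A=\{1,\dots,n\}$. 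Summing via the binomial identity $(e-1)^n=\sum_i(-1)^i\binom{n}{i}e^{n-i}$ produces
\[ \nu(yf)=(p^k-1)e^{n-1}+e^{-1}\bigl((e-1)^n+(-1)^n(e-1)\bigr), \]
completing the proof.
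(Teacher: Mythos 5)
Your proposal is correct and follows essentially the same route as the paper: verify that $yf$ is convenient, identify $M_J$ and a basis of $\mathrm{Hom}_{\mathbb Z}(M_J,{\mathbb Z})$, check nondegeneracy face by face (using $\lambda\neq 0$ on the top face and $p\nmid n-1$ in the borderline subface case), compute $\nu(yf)$ from the lattice indices, and invoke Proposition~5.8 together with Corollary~4.33. The only differences are cosmetic: you choose a different (equally valid) basis for $M_J$ and its dual, and you carry out the index/volume computations after projecting to the first $n$ coordinates rather than inside the hyperplane ${\mathbb R}\langle J\rangle\subset{\mathbb R}^{n+1}$; all the resulting numbers agree with the paper's.
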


{\it Remark}.  Note that the second summand on the right-hand side of
(5.10) is the dimension of the primitive part of middle-dimensional
cohomology of a smooth hypersurface of degree $e$.  When $\lambda =
0$, the hypersurface $X_0$ is smooth of degree $e$. (It is defined by
the equation $x_1^e + \dots + x_n^e = 0$.)

\begin{proof}[Proof of Corollary $5.9$]
The proof is a direct application of Proposition 5.8.  We sketch the
details.  It is straightforward to check that $yf$ is convenient:
$\Delta(yf)$ is the $n$-simplex in ${\mathbb R}^{n+1}$ with vertices
at the origin and the points 
\[ (n,0,\dots,0,1),(0,n,0,\dots,0,1),\dots,(0,\dots,0,n,1) \]
and for each subset $A\subseteq\{1,\dots,n\}$, one has $\dim\Delta(yf_A) =
n-|A|$.  We have
\[ J = \{(n,0,\dots,0,1),(0,n,0,\dots,0,1),\dots,(0,\dots,0,n,1),
(1,\dots,1,1)\}\subseteq{\mathbb Z}^{n+1}, \]
thus ${\mathbb R}\langle J\rangle$ is the hyperplane in ${\mathbb R}^{n+1}$
with equation $u_1 + \dots + u_n = nv$ and the lattice ${\mathbb
  Z}^{n+1}\cap{\mathbb R}\langle J\rangle$ has basis  
\[ B = \{(1,-1,0,\dots,0),(0,1,-1,0,\dots,0),\dots,(0,\dots,0,1,-1,0),
(0,\dots,0,n,1)\}. \]
It follows that $n!\,V_n(yf) = n^{n-1}$.  Similarly, we have
\[ (n-|A|)!\,V_{n-|A|}(yf_A) = \begin{cases} n^{n-1-|A|} & \text{if $|A|\leq
    n-1$,} \\ 1 & \text{if $|A| = n$.} \end{cases} \]

Let the first $n-1$ vectors in $B$ be denoted ${\bf a}_i$, $i=1,\dots,n-1$.
The lattice ${\mathbb Z}\langle J\rangle$ has basis 
\[ n{\bf a}_1,\dots,n{\bf a}_{n-2},(n-1,-1,\dots,-1,0), (1,\dots,1,1), \]
from which it follows that $M_J$ has basis
\begin{equation}
p^k{\bf a}_1,\dots,p^k{\bf a}_{n-2},(n-1,-1,\dots,-1,0), (1,\dots,1,1). 
\end{equation}
One then checks that
\[ [{\mathbb Z}^{n+1}\cap{\mathbb R}\langle J\rangle:M_J] = (p^k)^{n-2}. \]
For $|A|\geq 1$, the calculation is easier as $J_A$ consists of vectors
$(0,\dots,0,n,0,\dots,0,1)$ for which the ``$n$'' occurs in the $i$-th entry
for $i\not\in A$ (the vector $(1,\dots,1,1)$ does not appear).  One gets
\[ [{\mathbb Z}^{n+1}\cap{\mathbb R}\langle J_A\rangle:M_{J_A}] =
\begin{cases} (p^k)^{n-2} & \text{if $A=\emptyset$,} \\ (p^k)^{n-1-|A|} &
  \text{if $1\leq |A|\leq n-1$,} \\ 1 & \text{if $A = \{1,\dots,n\}$.}
\end{cases} \] 
We then have
\[ \frac{(n-|A|)!\,V_{n-|A|}(yf_A)}{[{\mathbb Z}^{n+1}\cap{\mathbb R}\langle
J_A\rangle:M_{J_A}]} = \begin{cases} p^ke^{n-1} & \text{if $A = \emptyset$,}
  \\ e^{n-1-|A|} & \text{if $1\leq |A|\leq n-1$,} \\ 1 & \text{if $A =
  \{1,\dots,n\}$.} \end{cases} \]
It is now straightforward to check that $\nu(yf)$ equals the
expression on the right-hand side of~(5.10).

It remains to check that $yf$ is nondegenerate relative to
$(\Delta(yf),M_J)$.  The dual basis of the basis (5.11) for $M_J$ is
the set of linear forms 
\begin{align*}
\ell_i(u_1,\dots,u_n,v) &= \sum_{j=1}^i \frac{1}{p^k}u_j + \frac{n-i}{p^k}
u_n - ev, \quad\text{$i=1,\dots,n-2$}, \\
\ell_{n-1}(u_1,\dots,u_n,v) &= -u_n + v, \\
\ell_n(u_1,\dots,u_n,v) &= v. 
\end{align*}
The polynomials $(yf)_\sigma$ for faces $\sigma$ of $\Delta(yf)$ that
do not contain the origin are exactly the polynomials $yf_A$ for
$A\subset\{1,\dots,n\}$, $|A|<n$.  If $A=\emptyset$, we have
\[ E_{\ell_n}(yf)-E_{\ell_{n-1}}(yf) = \lambda yx_1\dots x_n, \] 
which has no zero in $(\bar{\mathbb F}_q^\times)^{n+1}$.  So suppose that
$1\leq |A|\leq n-1$.  Then
\[ yf_A = \sum_{i\not\in A} yx_i^n. \]

Suppose first that $n\not\in A$.  If $1\in A$, then
\[ E_{\ell_1}(yf_A)+eE_{\ell_n}(yf_A) = -eyx_n^n, \]
and if $i\in A$ for some $i$, $2\leq i\leq n-2$, then 
\[ E_{\ell_i}(yf_A)-E_{\ell_{i-1}}(yf_A) = -eyx_n^n. \]
Neither of these monomials vanishes on $(\bar{\mathbb F}_q^\times)^{n+1}$. 
If $i\not\in A$ for all $i=1,\dots,n-2$, then $A = \{n-1\}$.  In this
case we have  
\[ E_{\ell_1}(yf_A)+eE_{\ell_n}(yf_A) = ey(x_1^n-x_n^n),
\] 
\[ E_{\ell_i}(yf_A)-E_{\ell_{i-1}}(yf_A) = ey(x_i^n-x_n^n) \] 
for $i=2,\dots,n-2$, and
\[ E_{\ell_n}(yf_A) = y(x_1^n + \dots + x_{n-2}^n
+ x_n^n). \]
If the first $n-2$ expressions vanish, then $yx_1^n = \dots = yx_{n-2}^n =
yx_n^n$.  The vanishing of the last expression is then equivalent to
$(n-1)yx_n^n = 0$, which is impossible in~$(\bar{\mathbb F}_q^\times)^{n+1}$. 

Now suppose that $n\in A$.  If $1\not\in A$, then
\[ E_{\ell_1}(yf_A)+eE_{\ell_n}(yf_A) = eyx_1^n \]
and if $i\not\in A$ for some $i$, $2\leq i\leq n-2$, then 
\[ E_{\ell_i}(yf_A)-E_{\ell_{i-1}}(yf_A) = eyx_i^n. \]
Neither of these monomials vanishes on $(\bar{\mathbb F}_q^\times)^{n+1}$. 
If $i\in A$ for $i=1,\dots,n-2$, then $A$ contains all indices except $i=n-1$
and $E_{\ell_n}(yf_A) = yx_{n-1}^n$, which does not vanish on
$(\bar{\mathbb F}_q^\times)^{n+1}$.   

Thus $yf$ satisfies the hypotheses of Proposition 5.8. 
\end{proof}

\end{document}